\numberwithin{equation}{section}
\theoremstyle{plain}
\newtheorem{theorem}{Theorem}
\newtheorem{proposition}{Proposition}
\newtheorem{remark}{Remark}
\newtheorem{lemma}{Lemma}
\newtheorem{thm}{Theorem}
\newenvironment{hproof}{%
  \proof}{\endproof}
\theoremstyle{definition}
\newtheorem{definition}{Definition}
\newcommand{\R}{\mathbb{R}}
\newcommand{\Z}{\mathbb{Z}}
\newcommand{\N}{\mathbb{N}}
\newcommand{\C}{\mathbb{C}}
\newcommand{\Hom}{\textnormal{Hom}}
\newcommand{\id}{\textnormal{id}}
\newcommand{\Tor}{\textnormal{Tor}}
\newcommand{\Spin}{\mathrm{Spin}}
\DeclareMathOperator{\Or}{O}
\DeclareMathOperator{\Ker}{ker}
\DeclareMathOperator{\Ks}{KS}
\DeclareMathOperator{\Sw}{SW}
\begin{document}

\author{Rafael Torres}

\title[DIFF Knotted/TOP unknotted nullhomotopic 2-spheres]{Smoothly knotted and topologically unknotted nullhomologous surfaces in 4-manifolds.}

\address{Scuola Internazionale Superiori di Studi Avanzati (SISSA)\\ Via Bonomea 265\\34136\\Trieste\\Italy}

\email{rtorres@sissa.it}

\subjclass[2020]{Primary 57K45, 57R55; Secondary 57R40, 57R52}

\maketitle

\emph{Abstract}: We point out that recent constructions of inequivalent smooth structures yield a manufacturing procedure of infinite sets of topologically isotopic and pairwise smoothly inequivalent nullhomologous 2-spheres and 2-tori inside a myriad of 4-manifolds. These surfaces bound a locally flat embedded handlebody but not a smoothly embedded one, and include the first examples of nullhomotopic 2-spheres in closed 4-manifolds to display such behavior. We exhibit an example of a locally flat non-smoothable embedding of a nullhomotopic 2-sphere in a closed smooth simply connected 4-manifold.

\section{Introduction and main result}\label{Introduction}

All manifolds and embeddings in this paper are in the smooth category unless it is otherwise specified. Two surfaces $\Sigma_1$ and $\Sigma_2$ embedded in a 4-manifold $X$ are said to be equivalent if there is a diffeomorphism of pairs\begin{equation}\label{Diffeo Pairs Intro}(X, \Sigma_1)\rightarrow (X, \Sigma_2)\end{equation}and inequivalent if there is no such map. The surface $\Sigma_i$ is said to be smoothly knotted if it does not bound an embedded handlebody $H\subset X$ up to isotopy. When the surfaces $\Sigma_1$ and $\Sigma_2$ are locally flat embedded in a topological 4-manifold $X$, we say that they are topologically isotopic if there is a homeomorphism of pairs as (\ref{Diffeo Pairs Intro}) that is isotopic to the identity map. Furthermore, we say that $\Sigma_i$ is topologically unknotted if it is topologically isotopic to a locally flat embedded surface $\Sigma\subset X$ that bounds a locally flat embedded handlebody $H\subset X$. Among the many interesting four-dimensional phenomena is the existence of embedded surfaces $\Sigma_i\hookrightarrow X$ that are inequivalent within a fixed topological isotopy class. We call such embeddings exotic embeddings. Fintushel-Stern introduced a surgery procedure on an embedded 2-torus that results in infinite sets of exotic embeddings of 2-tori with simply connected complement \cite{[FintushelSternS]}. Finashin \cite{[Finashin]}, Kim \cite{[Kim2]}, and Kim-Ruberman \cite{[KimRuberman], [KimRuberman2]} used a variation of Fintushel-Stern's surgery procedure to construct exotic embeddings of surfaces whose complements have cyclic fundamental groups. Examples of exotic embeddings of homologically essential 2-spheres have been constructed by Akbulut \cite{[AkbulutR]}, Auckly-Kim-Melvin-Ruberman \cite{[AuckleyKimMelvinRuberman]}, and Ruberman \cite{[Ruberman]}. In \cite[Theorem 1]{[HoffmanSunukjian]}, Hoffman-Sunukjian exhibited infinite sets of exotic embeddings of nullhomologous 2-tori that are topologically unknotted and smoothly knotted, which arise from Fintushel-Stern's knot surgery construction \cite{[FintushelSternK]}. 

While it is still unknown if there are exotic embeddings of topologically unknotted 2-spheres in the 4-sphere, we provide the first examples of smoothly knotted/topologically unknotted nullhomotopic 2-spheres inside closed 4-manifolds in this paper. 
  
\begin{thm}\label{Theorem A} Let $X$ be a closed symplectic 4-manifold that contains a 2-torus $T$, which is either symplectic or homologically essential and Lagrangian. Suppose that the normal bundle of $T$ is trivial, and that both $X$ and $X\setminus \nu(T)$ are simply connected.

$\bullet$ There is an infinite set of topologically isotopic 2-spheres\begin{equation}\label{Inequivalent Spheres}\{S_i: i \in \Z\}\end{equation}embedded in $X\#S^2\times S^2$ that satisfy $[S_i] = 0 \in H_2(X\#S^2\times S^2; \Z)$ and 2-knot group $\pi_1(X\#S^2\times S^2\backslash \nu(S_i)) = \Z$ for every $i\in \Z$, and which are pairwise inequivalent. 

Performing surgery to $X\#S^2\times S^2$ along each element of (\ref{Inequivalent Spheres}) yields an infinite set $\{X_n(1): n\in \Z\}$ of pairwise non-diffeomorphic 4-manifolds that are homeomorphic to $X\#S^2\times S^2\#S^1\times S^3$. 

$\bullet$ Suppose that $b_2(X)\geq |\sigma(X)| + 6$. There is a an infinite set of topologically isotopic 2-tori\begin{equation}\label{Inequivalent Tori}\{T_i': i \in \N\}\end{equation}embedded in $X$ that satisfy $[T_i'] = 0 \in H_2(X; \Z)$ and $\pi_1(X\backslash \nu(T_i')) = \Z$  for every $i\in \Z$, and which are pairwise inequivalent.

There is a torus surgery that can be performed to $X$ along each element of (\ref{Inequivalent Tori}), which yields an infinite set $\{X_n(1): n\in \Z\}$ of pairwise non-diffeomorphic 4-manifolds that are homeomorphic to $X\#S^2\times S^2\#S^1\times S^3$. 

\end{thm}

Each element of (\ref{Inequivalent Spheres}) and (\ref{Inequivalent Tori}) is topologically unknotted and smoothly knotted in $X\#S^2\times S^2$ and $X$, respectively; see Section \ref{Section Proof of Theorem A}. For instance, each element of (\ref{Inequivalent Spheres}) bounds a locally flat embedded 3-ball in $X\#S^2\times S^2$ up to isotopy, but no element bounds an embedded 3-ball. The first  part of Theorem \ref{Theorem A} was kindly suggested to us by Bob Gompf after an unsuccessful attempt to construct inequivalent smooth structures on 4-manifolds that lack an almost-complex structure \`a la Fintushel-Stern \cite[\S 5]{[FintushelStern1]}. Previously known examples of exotic embeddings require either for the 2-sphere to be homologically essential, for the nullhomologous surfaces to have genus equal to one or for the ambient 4-manifold to have non-empty boundary; see Akbulut \cite{[AkbulutR]}, Auckly-Kim-Melvin-Ruberman \cite{[AuckleyKimMelvinRuberman]}, Fintushel-Stern \cite{[FintushelSternS]}, Mark \cite{[Mark]}, Kim-Ruberman \cite{[KimRuberman]}, Hayden \cite{[Hayden]}, Hoffman-Sunukjian \cite{[HoffmanSunukjian]}, Juh\'asz-Miller-Zemke \cite{[JuhaszMillerZemke]}, Oba \cite{[Oba]}, Ruberman \cite{[Ruberman]}. The second part of Theorem \ref{Theorem A} is verbatim a result of Hoffman-Sunukjian \cite[Theorem 1.1]{[HoffmanSunukjian]}. Our contribution here is a different proof of their result, where the 2-tori are distinguished using an invariant introduced by Fintushel-Stern \cite[Section 2]{[FintushelSternTori]}.

Sunukjian provided conditions for the existence of a homeomorphism of pairs (\ref{Diffeo Pairs Intro}) for two locally flat embedded surfaces $\Sigma_i$ in a topological 4-manifold $X$ \cite[Section 7]{[Sunukjian]}. In the case the fundamental group of the complement $X\setminus \nu(\Sigma_i)$ is cyclic, Sunukjian obtained a criteria to determine when the surfaces are topologically isotopic \cite[Theorems 7.1 - 7.4]{[Sunukjian]}. Another contribution of this paper is a criteria to determine the existence of a homeomorphism of pairs for locally flat embedded 2-spheres in terms of the homeomorphism classes of the 4-manifolds obtained by applying surgery to them; see Section \ref{Section Spherical Modifications}. The definition of product framing in the following statement is given in Definition \ref{Definition Product Framing}.

\begin{thm}\label{Theorem C}Let $X_i$ be a closed connected orientable 4-manifold that contains an embedded 2-torus $T$ of self-intersection zero and fix a framing $\nu(T)\rightarrow T^2\times D^2$. Let $\gamma_i\subset T\subset X_i$ be an embedded loop whose homotopy class is a generator of the group $\pi_1X_i = \Z$, which is assumed to have the product framing for $i = 1, 2$. Let $X_{\gamma_i}$ be the closed connected simply connected 4-manifold that is obtained by carving out a tubular neighborhood $\nu(\gamma_i)$ of the loop $\gamma_i\subset X_i$ and gluing back in a copy of $D^2\times S^2$, and denote by $S_i:= \{0\}\times S^2\subset D^2\times S^2$ be the belt 2-sphere of the surgery. 

The following statements are equivalent

\begin{enumerate}
\item There is a homeomorphism\begin{equation}\label{Homeo 1}X_1\rightarrow X_2.\end{equation}

\item There is a homeomorphism of pairs\begin{equation}\label{Homeo 1'}f:(X_1, \gamma_1)\rightarrow (X_2, \gamma_2).\end{equation}


\item There is a homeomorphism of pairs\begin{equation}\label{Homeo 2'B}F:(X_{\gamma_1}, S_1)\rightarrow (X_{\gamma_2}, S_2).\end{equation} 
\end{enumerate}Furthermore, a homeomorphism (\ref{Homeo 2'B}) taken as a homeomorphism $F: X_{\gamma_1}\rightarrow X_{\gamma_2}$ is degree 1 normally bordant to the identity of $X_{\gamma_2}$.

\end{thm}

The choice of framing is relevant as Gordon's examples of topologically inequivalent 2-spheres inside $S^4$ in \cite{[Gordon]} indicate. The utility of Theorem \ref{Theorem C} arises from a codimension three  property of loops embedded in a 4-manifold: any two homotopic embeddings $\gamma_j\hookrightarrow X$ are isotopic \cite[Definition 1.1.5, Example 4.1.3]{[GompfStipsicz]}.  In the situation of Theorem \ref{Theorem A}, there is a diffeomorphism between  $X_{\gamma_1}$ and $X_{\gamma_2}$, and we regard the 2-spheres $S_1$ and $S_2$ as embedded in $X_{\gamma_2}$. Work of Perron \cite{[Perron]} and Quinn \cite{[Quinn]} allows us to conclude that $S_1$ is topologically isotopic to $S_2$ in this case. 

Hambleton-Teichner \cite{[HambletonTeichner]} and Friedl-Hambleton-Melvin-Teichner \cite{[FriedlHambletonMelvinTeichner]} constructed closed orientable non-smoothable topological 4-manifolds with  fundamental group $\Z$ that are not homotopy equivalent to the connected sum of a simply connected 4-manifold with $S^1\times S^3$. An interesting consequence of their work is the following result.

\begin{thm}\label{Theorem FHMT} There are locally flat embedded nullhomotopic 2-spheres\begin{equation}\label{Topol Emb}S,\Sigma\hookrightarrow 4\mathbb{CP}^2\end{equation}that are concordant and whose exteriors are not homotopy equivalent. 

While the embedding (\ref{Topol Emb}) can be taken to be smooth for $S$, the locally flat embedding of $\Sigma$ is not topologically isotopic to a smooth embedding. 

\end{thm}

Theorem \ref{Theorem FHMT} exhibits the first example of a locally flat embedded nullhomotopic 2-sphere in a closed 4-manifold that is topologically concordant but not topologically isotopic to a smooth embedding. Moreover, while Freedman-Quinn \cite[11.7A Theorem]{[FreedmanQuinn]} have shown that any 2-sphere inside $S^4$ with infinite cyclic 2-knot group is topologically unknotted, Theorem \ref{Theorem FHMT} points out that this unknotting phenomena need not occur for 2-spheres that are locally flat embedded in closed 4-manifolds with larger second Betti number.

The cut-and-paste techniques considered in this paper yield several choices of ambient manifolds as we sample in the following theorem.

\begin{thm}\label{Theorem D}Let $X\in \{S^2\times S^2, S^1\times S^3, S^2\times T^2\}$. There is an infinite set\begin{equation}\label{Tori Example B}\{T_i': i \in \Z\}\end{equation}of pairwise smoothly inequivalent 2-tori embedded in $X$ whose second homology class is $[T_i'] = 0 \in H_2(X; \Z)$. 

There is a an infinite set\begin{equation}\label{Knots Example B}\{S_i: i \in \Z\}\end{equation}of pairwise smoothly inequivalent 2-spheres embedded in $X\#S^2\times S^2$ whose second homology class is $[S_i] = 0 \in H_2(X\#S^2\times S^2; \Z)$. 

\end{thm}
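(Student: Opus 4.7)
The plan is to produce the tori and spheres in each of the three target manifolds by an application of the cut-and-paste machinery of the paper; since none of the target manifolds satisfies the Seiberg-Witten hypothesis of Theorem~\ref{Theorem A} directly (all three have vanishing $SW$, and $S^1 \times S^3$, $S^2 \times T^2$ are not even simply connected), we proceed case-by-case, identifying a nullhomologous torus in each $X$ on which Fintushel-Stern knot surgery can be performed, and showing that the resulting tori are pairwise smoothly non-equivalent by means of the invariant of \cite{[FintushelSternTori]}.

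For the torus families \eqref{Tori Example B}, the first step is to exhibit in each $X$ an embedded $2$-torus $T \hookrightarrow X$ with $[T] = 0 \in H_2(X;\Z)$ and self-intersection zero. For $X = S^1 \times S^3$, the torus $T = S^1 \times K$ with $K \subset S^3$ an unknot is automatically nullhomologous since $H_2(S^1 \times S^3;\Z) = 0$. For $X = S^2 \times S^2$ and $X = S^2 \times T^2$, we take $T$ to be a small unknotted torus inside an ambient $4$-ball, or a torus arising from a suitable fiber-sum decomposition. Apply Fintushel-Stern knot surgery parametrized by knots $K_i \subset S^3$ with pairwise distinct Alexander polynomials to obtain a sequence of tori $T_i' \subset X$. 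To see that these are pairwise smoothly non-equivalent, perform a further torus surgery on each $T_i'$ to produce auxiliary $4$-manifolds $X_i$ whose Seiberg-Witten basic classes are governed by $\Delta_{K_i}$ via the knot-surgery formula; an ambient diffeomorphism $(X, T_i') \to (X, T_j')$ would induce a diffeomorphism $X_i \to X_j$, contradicting the distinct $SW$-invariants. This is exactly the invariant-of-a-torus framework of Fintushel-Stern referenced in Section~\ref{Introduction}.

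For the sphere families \eqref{Knots Example B} in $X \# S^2 \times S^2$, tube each torus $T_i'$ produced above to a sphere factor of the $S^2 \times S^2$ summand to obtain a $2$-sphere $S_i$; the tubing preserves the nullhomology class and transports the distinguishing surgery construction to the spherical setting, so the resulting spheres are pairwise smoothly non-equivalent by the same argument. The main obstacle is the choice of initial torus $T$ in each case: the auxiliary surgery on $T_i'$ must yield a $4$-manifold with non-trivial Seiberg-Witten invariants, even though the ambient $X$ has trivial $SW$. Overcoming this is precisely the reverse-engineering trick of Fintushel-Stern, and once the torus is fixed in each of $S^2 \times S^2$, $S^1 \times S^3$, and $S^2 \times T^2$, the infinite family follows from the knot-surgery formula together with the detection argument sketched above.
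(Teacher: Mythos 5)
Your proposal identifies the right tools (the Fintushel--Stern invariant $\mathcal{I}$ of Definition~\ref{Definition FS Invariant for 2-Tori}, and detection by undoing a surgery to recover $4$-manifolds with distinct Seiberg--Witten invariants), but it does not actually produce a working construction, and a couple of the concrete steps you sketch would fail.

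The central gap is the choice of the initial torus $T \subset X$. You propose, for $S^2\times S^2$ and $S^2\times T^2$, to take ``a small unknotted torus inside an ambient $4$-ball,'' and for $S^1\times S^3$ to take $S^1\times K$ with $K$ an unknot. For the detection argument to work you need the auxiliary torus surgery on $T_i'$ to yield $4$-manifolds with nontrivial (and varying) $SW$; but a torus sitting in a ball, or $S^1\times(\text{unknot})$, is ``standard'' in a way that guarantees all of the surgered manifolds come back diffeomorphic (or have identically vanishing $SW$, since $b_2^+ = 0$ for $S^1\times S^3$ and $b_2^+ = 1$ with trivial $SW$ for the others). You acknowledge that this is ``the main obstacle'' and defer to ``the reverse-engineering trick,'' but invoking that trick by name is not the same as carrying it out: the reverse-engineering trick \emph{is} the construction of $T$, starting from a model manifold with nontrivial $SW$ ($T^4$ or $\Sigma_2\times\Sigma_2$ in the paper), performing a sequence of torus surgeries to produce an infinite family $X(1)_n$ of pairwise non-diffeomorphic $4$-manifolds, and then taking the core torus of one final $(0,1,0)$-surgery; Lemma~\ref{Lemma Diffeomorphism 1} and Lemma~\ref{Lemma FintushelParkStern} are then needed to pin the ambient manifold down as $S^2\times T^2$, $S^1\times S^3$, or $S^2\times S^2$. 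Without this, there is no reason the tori you describe are distinguishable, and in fact for the choices you make explicitly they are not.

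The sphere construction has a further independent problem. You propose to ``tube each torus $T_i'$ to a sphere factor'' in the $S^2\times S^2$ summand to obtain a sphere $S_i$. An ambient tube (connected sum in $X\#S^2\times S^2$) of a genus-$1$ surface with a genus-$0$ surface produces a genus-$1$ surface, not a sphere, so this step does not yield a $2$-sphere at all. The paper instead produces the spheres by a surgery along a loop representing the generator of $\pi_1(X(1)_n)=\Z$ (a $1$-handle/$3$-handle cut-and-paste, Definition~\ref{Definition Spherical Modification}): the core $\{x_0\}\times S^2$ of the glued-in $D^2\times S^2$ is the desired nullhomologous sphere, the diffeomorphism types $Z_n$, $F_n$, $Y(0,n)$ are identified by Moishezon's trick, and reversing the surgery recovers $X(1)_n$ for the detection step. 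That mechanism is essential and cannot be replaced by tubing.

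In short, your write-up correctly names the detection machinery and gestures at reverse-engineering, but omits the reverse-engineering itself, which is the whole content of the construction (Section~\ref{Section 2-tori in the 4-torus} and Lemma~\ref{Lemma Diffeomorphism 1}); and the step passing from tori to spheres is wrong as stated.
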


Theorem \ref{Theorem D} places torus surgeries and surgeries along as additions to the list of constructions of examples of surfaces with interesting 2-knot group. The latter include twist spinning surfaces in Kim \cite{[Kim2]} and the principle of localized knotted of Hayden in\cite{[Hayden]}. Our proofs of Theorem \ref{Theorem A} and Theorem \ref{Theorem D} are driven by recent constructions of inequivalent smooth structures on closed simply connected 4-manifolds $X$ with small second Betti number due to Akhmedov-Park \cite{[AkhmedovPark1], [AkhmedovPark2]}, Akhmedov-Baykur-Park \cite{[AkhmedovBaykurPark]}, Baldridge-Kirk \cite{[BaldridgeKirk1], [BaldridgeKirk2]}, Fintushel-Stern \cite{[FintushelStern2], [FintushelStern3]}, Fintushel-Stern-Park \cite{[FintushelParkStern]}, Szab\'o \cite{[Szabo]}. These results are based on performing torus surgeries (see Section \ref{Section Torus Surgeries} for the definition) to a 4-manifold $Y$ with non-trivial fundamental group and non-trivial Seiberg-Witten invariants. Under the right conditions, the toughest step in all of these constructions is arguably the computation of the fundamental group. We observe that it is straight-forward to produce a simply connected 4-manifold if one is willing to apply to $Y$ either a torus surgery of multiplicity zero or a surgery along a loop whose homotopy class corresponds to a generator of the fundamental group; Section \ref{Section Torus Surgeries} and Section \ref{Section Spherical Modifications}. Performing either of these surgeries to $Y$ comes at the price of rendering useless the gauge theoretical invariants of the 4-manifold produced for the purposes of discerning smooth structures. Moreover, in a myriad of instances, the 4-manifold obtained after the surgeries is diffeomorphic to $X$ and no inequivalent smooth structure is unveiled. Nevertheless, our efforts are not entirely moot and we do obtain infinitely many pairwise smoothly inequivalent nullhomologous surfaces inside $X$ and $X\# S^2\times S^2$.

We organized the paper in the following way. The results that are involved in the proofs of our theorems are contained in Section \ref{Section Tools}. A description of the cut-and-paste constructions of 4-manifolds that are used is given in Section \ref{Section Torus Surgeries} and in Section \ref{Section Spherical Modifications}. The latter contains a proof of Theorem \ref{Theorem C}. Specific choices of surgeries as well as several diffeomorphisms that are useful for our proof of Theorem \ref{Theorem A} are given in Section \ref{Section 2-tori in the 4-torus}. Proofs of Theorem \ref{Theorem A}, Theorem \ref{Theorem FHMT} and Theorem \ref{Theorem D} are given in Section \ref{Section Proofs Main Results}.

\subsection{Acknowledgements:} We are indebted to Bob Gompf for useful e-mail correspondence, which motivated the writing of this note. We thank John Etnyre and Bob Gompf for patiently pointing out the need to rectify the proof of Lemma 1 in an earlier version of the note. We thank Maggie Miller and Danny Ruberman, and an anonymous referee for their time and their suggestions, which helped us improve the note. Je remercie Baptiste Morin et Sol Avino pour l'aide dans la traduction du r\'esum\'e.

\section{Tools and technology}\label{Section Tools}

\subsection{Torus surgeries }\label{Section Torus Surgeries} Let $T\hookrightarrow X$ be an embedded 2-torus inside a closed smooth 4-manifold $X$. Suppose that $[T]^2 = 0$ so that its tubular neighborhood $\nu(T)$ is diffeomorphic to the thick torus $T^2\times D^2$. A framing of $T$ is a diffeomorphism\begin{equation}\label{Framing Fixed}\phi: \nu T\rightarrow T^2\times D^2\end{equation}such that $\phi^{-1}(T^2\times \{0\}) = T$. Consider the projection $\pi: T^2\times D^2\rightarrow T^2$ and denote by $\{x, y\}$ homotopy classes of loops that carry the generators of the fundamental group $\pi_1(T) = \Z x \oplus \Z y$. The push-offs of $x$ and $y$ are loops on $\partial \nu(T)$ given by\begin{center}$m = \phi^{-1}(\pi(x)\times \{d\})$ and $l = \phi^{-1}(\pi(y)\times \{d\})$\end{center} for $d\in \partial D^2$. The framing of $T$ is encoded by the pair $\{m, l\}$, with $m$ and $l$ being  homologous to $x$ and $y$, respectively, in $\nu(T)$. The meridian $\mu_T$ of the 2-torus $T$ in the complement $X\backslash \nu(T)$ is a fixed curve in the isotopy class of $\{t\}\times \partial D^2 \subset \partial \nu(T)$, and the group $H_1(\partial \nu(T); \Z) = \Z^3$ is generated by\begin{equation}\label{Loops generating torus H1}\{m, l, \mu_T\}.\end{equation}The surgery curve $\gamma\subset \partial \nu(T)$ is the push off of a primitive loop in $T$. 


\begin{definition}\label{Definition Torus Surgery}Let $T\subset X$ be an embedded 2-torus with a fixed framing (\ref{Framing Fixed}) and a surgery curve $\gamma\subset T$. A $(p, q, r)$-torus surgery on $T$ along $\gamma$ is the cut-and-paste operation that removes a copy of $\nu(T)$ from $X$ and glues back a copy of $T^2\times D^2$ to produce a closed smooth 4-manifold\begin{equation}\label{Torus Surgery}X_{T, \gamma}(p, q, r):= (X\backslash \nu(T)) \cup_{\varphi} (T^2\times D^2)\end{equation}where the common boundaries are identified by a diffeomorphism\begin{equation}\varphi: T^2\times \partial D^2 \rightarrow \partial(X\backslash \nu(T))\end{equation} that satisfies \begin{equation}\label{Induced Map in Homology}\varphi_{\ast}([\{pt\}\times \partial D^2]) = p[m] + q[l] + r [\mu_T]\end{equation} in $H_1(\partial (X\backslash \nu(T)); \Z)$ for integers $p, q$, and $r$. The integer $r$ is known as the multiplicity of the $(p,q, r)$-torus surgery.

\end{definition}

The cut-and-paste construction of Definition \ref{Definition Torus Surgery} is also known as a smooth logarithmic transformation in the literature \cite[p. 83]{[GompfStipsicz]}, \cite[Section 2.2]{[BaykurSunukjian]}. A $(p, q, r)$-torus surgery (\ref{Torus Surgery}) can be undone. The 4-manifold $X$ is recovered by applying a $(p', q', r')$-torus surgery on the core torus\begin{equation}\label{Core Torus}T'\subset X_{T, \gamma}(p, q, r)\end{equation}for $T':= T^2\times \{0\}\subset T^2\times D^2$ in (\ref{Torus Surgery}) along a surgery curve $\gamma'\subset X_{T, \gamma}(p, q, r)$. This is a local operation on the 4-manifold in the sense that\begin{equation}\label{Same Tori Complements}X_{T, \gamma}(p, q, r)\setminus \nu(T') = X\setminus \nu(T).\end{equation}If the 2-torus $T$ and the surgery curve $\gamma$ are essential in $H_2(X)$ and $H_1(X\setminus \nu(T))$, respectively, then the core 2-torus (\ref{Core Torus}) is nullhomologous \cite[Section 3]{[FintushelStern3]}. A nullhomologous 2-torus has a canonical framing known as the nullhomologous framing. This is the framing of $T$ such that $\phi^{-1}_{\ast}([T\times \{d\}])\in \Ker i_\ast$ for the inclusion map $i: \partial \nu(T)\rightarrow X\setminus \nu(T)$. In particular, the loops $\{m, l\}$ are nullhomologous in the complement.

\subsection{Surgeries along loops and 2-spheres}\label{Section Spherical Modifications} Let $\gamma \subset X$ be a loop embedded in a closed smooth orientable 4-manifold $X$. The tubular neighborhood $\nu(\gamma)$ is diffeomorphic to $S^1\times D^3$. There are two choices of framing for $\gamma$ given that $\pi_1(\Or(3)) = \Z/2$ \cite[Section 4.1]{[GompfStipsicz]}. We pin down a specific choice of framing in the following definition. 

\begin{definition}\label{Definition Product Framing}Let $T\subset X$ be a 2-torus of self-intersection zero with a fixed framing $\nu(T)\rightarrow T^2\times D^2$, and such that $\gamma\subset T\subset X$. The product framing on the loop $\gamma\subset X$ is the framing induced by the $\nu(T) = T^2\times D^2$ product structure.
\end{definition}

Now that a framing has been pinned down, we can unambiguously define the 4-manifold obtained from $X$ by doing surgery along $\gamma$ in the following definition.

\begin{definition}\label{Definition Spherical Modification} \cite[Definition 5.2.1]{[GompfStipsicz]}. A surgery on $X$ along a closed simple loop that is contained in an embedded $T^2\times D^2\subset X$ is the cut-and-paste procedure that removes a copy of $\nu(\gamma)$ from $X$ and caps the boundary off with a copy of $D^2\times S^2$ to produce a closed smooth 4-manifold\begin{equation}\label{Spherical Modification}X_{\gamma}:= (X\backslash \nu(\gamma)) \cup_{\id} (D^2\times S^2)\end{equation}using a diffeomorphism of the boundary that is isotopic to the identity map. 
\end{definition}

Notice that the surgery of Definition \ref{Definition Spherical Modification} can be reversed. The belt 2-sphere of the surgery (\ref{Spherical Modification}) is an embedded 2-sphere $S\hookrightarrow X_{\gamma}$ of self-intersection zero, and we recover the original 4-manifold as\begin{equation}\label{Reverse Spherical Modification}X = (X_{\gamma}\backslash \nu(S)) \cup_{\id} (S^1\times D^3),\end{equation}where the gluing diffeomorphism of the boundary is isotopic to the identity map.

In the main constructions in this paper, the homotopy class of the belt 2-sphere is zero as we state in the following lemma. 

\begin{lemma}Let $X$ be a closed oriented 4-manifold and let $\gamma\subset X$ be an embedded loop whose homotopy class generates the fundamental group $\pi_1X = \Z$. The belt 2-sphere $S\subset X_\gamma$ of the surgery (\ref{Spherical Modification}) is nullhomotopic. 

\end{lemma}

By employing results of Freedman-Quinn \cite[Section 9.3]{[FreedmanQuinn]} on the existence and uniqueness of a tubular neighborhood of a submanifold of a topological 4-manifold in the locally flat category allow, it is possible to obtain a natural extension of these results to the locally flat category. 


We now recall the statement of Theorem \ref{Theorem C} from the introduction and prove it.

\begin{theorem}\label{Theorem Homeomorphism Spheres}Let $X_i$ be a closed connected orientable 4-manifold that contains an embedded 2-torus $T$ of self-intersection zero and fix a framing $\nu(T)\rightarrow T^2\times D^2$. Let $\gamma_i\subset T\subset X_i$ be an embedded loop whose homotopy class is a generator of the group $\pi_1X_i = \Z$, and which is assumed to have the product framing for $i = 1, 2$ as in Definition \ref{Definition Product Framing}. Let $X_{\gamma_i}$ be the closed connected simply connected 4-manifold that is obtained by performing surgery along $\gamma_i\subset X_i$.

The belt 2-sphere $S_i:= \{0\}\times S^2\subset D^2\times S^2$ of the surgery (\ref{Spherical Modification}) is nullhomotopic. 

The following statements are equivalent

\begin{enumerate}
\item There is a homeomorphism\begin{equation}\label{Homeo 1}X_1\rightarrow X_2.\end{equation}

\item There is a homeomorphism of pairs\begin{equation}\label{Homeo 1'}f:(X_1, \gamma_1)\rightarrow (X_2, \gamma_2).\end{equation}


\item There is a homeomorphism of pairs\begin{equation}\label{Homeo 2'}F:(X_{\gamma_1}, S_1)\rightarrow (X_{\gamma_2}, S_2).\end{equation}

\end{enumerate}

The homeomorphism of pairs (\ref{Homeo 2'}) taken as a homeomorphism\begin{equation}\label{Homeomorphism 2'}F: X_{\gamma_1} \rightarrow X_{\gamma_2}\end{equation} is normally bordant to the identity of $X_{\gamma_2}$.

\end{theorem}

We follow closely work of Kim-Ruberman \cite[Appendix, Proofs of Theorem 1.2 and Theorem 1.3]{[KimRuberman]} in the proof of the result.

\begin{proof} The equivalence $(1) \Leftrightarrow (2)$ follows from the codimension three property of the submanifolds $\gamma_i\subset X_i$, which says that they are isotopic if and only if they are homotopic  \cite[Example 4.1.3]{[GompfStipsicz]}. The homeomorphism (\ref{Homeo 1}) yields a homeomorphism of pairs $(X_1, \gamma_1)\rightarrow (X_2, \gamma_2)$:  since $[\gamma_1] = [\gamma_2]\in \pi_1(X_i)$, then the loops are isotopic.


We argue the equivalence $(2)\Leftrightarrow (3)$. Assuming (2), we have the diagram\begin{equation}\label{Diagram 1 Proof}
\begin{CD}
X_1\setminus \nu(\gamma_1) @>\id_1>> X_{{\gamma_1}}\setminus \nu(S_1)\\
@VVf_0V            @VVF_0V\\
X_2\setminus \nu(\gamma_2)@>\id_2>>  X_{{\gamma_2}}\setminus \nu(S_2).
\end{CD}
\end{equation}

Both horizontal homeomorphisms exist due to the local nature of the cut-and-paste construction (\ref{Spherical Modification}) and the vertical homeomorphism $f_0$ on the left-side of the diagram exists by the existence of the homeomorphism (\ref{Homeo 1'}). We obtain a homeomorphism $F_0: X_{{\gamma_1}}\setminus \nu(S_1) \rightarrow X_{{\gamma_2}}\setminus \nu(S_2)$ that extends to a homeomorphism of pairs $F: (X_{{\gamma_1}}, S_1)\rightarrow (X_{{\gamma_2}}, S_2)$ due to the choice of product framing on the loops and the gluing diffeomorphism in (\ref{Spherical Modification}) being isotopic to the identity map. We conclude that $(2)\Rightarrow (3)$. The converse implication follows similarly.

We need to show that the homeomorphism (\ref{Homeomorphism 2'}) has trivial normal invariant. The normal invariant $n(F)$ of the homeomorphism (\ref{Homeomorphism 2'}) lies in the group $[X_{\gamma}, G/TOP] \cong H^2(X_{\gamma}; \Z/2)\oplus H^4(X_{\gamma}; \Z)$ \cite[Section 2]{[Quinn]}, \cite{[CochranHabegger]}. As argued by Kim-Ruberman \cite[p. 5872]{[KimRuberman]}, the component of $n(F)$ that lies in the fourth cohomology group vanishes since $F$ is a homeomorphism; their argument works for homotopy equivalences. Denote by $S(F)$ the component of the normal invariant $n(F)$ that lies in $H^2(X_{\gamma}; \Z/2) = \Hom(H_2(X_{\gamma}; \Z/2), \Z/2)$. The class $S(F)$ is determined by its evaluation $\langle S(F), [\Sigma]\rangle$ on surfaces $\Sigma\subset X_{\gamma}$, and Kim-Ruberman provide a method to evaluate it in terms of the Arf invariant \cite[Proof of Lemma 2.4]{[KimRuberman]} using Cochran-Habegger \cite[Theorem 5.1]{[CochranHabegger]}. Perturbing $F$, we get a normal map $F^{-1}\Sigma\rightarrow \Sigma$ with surgery obstruction $\langle S(F), [\Sigma]\rangle$. The evaluation depends only on the homology class of the surface, but not on its representative and it is given by the Arf invariant \cite[p. 5873]{[KimRuberman]}. The invariant vanishes since the 2-spheres are nullhomologous.
\end{proof}

\begin{remark}\label{Remark Infinite Cyclic}Work of Freedman-Quinn \cite[Theorem 10.7A]{[FreedmanQuinn]} and Conway-Powell \cite[Theorem 5.17]{[ConwayPowell]} can be used to show that the implication $(3)\Rightarrow (1)$ of Theorem \ref{Theorem Homeomorphism Spheres} holds. Denote by $\Lambda := \Z[t^\pm]$ the group ring of Laurent polynomials \cite{[ConwayPowell]}. We reconstruct the $\Lambda$-valued intersection forms $\lambda_{X_1}$ and $\lambda_{X_2}$ from Diagram (\ref{Diagram 1 Proof}) and show that they are isometric building on an argument due to Conway-Powell \cite[proof of Lemma 5.16]{[ConwayPowell]}. Diagram (\ref{Diagram 1 Proof}) implies that there is an isometry\begin{equation}\lambda_{X_1\backslash \nu(\gamma_1)}\rightarrow \lambda_{X_2\backslash \nu(\gamma_2)}\end{equation} of the $\Lambda$-valued intersection forms $\lambda_{X_1\backslash \nu(\gamma_1)}$ and $\lambda_{X_2\backslash \nu(\gamma_2)}$. It also holds that\begin{equation}H_1(\partial (X_1\backslash \nu(\gamma_i)); \Lambda) = H_1(S^1\times S^2; \Lambda) = H_1(\R \times S^2; \Z) = 0.\end{equation} At this point we use the universal coefficient spectral sequence with second page $E^2_{p, q} = \Tor^{\Lambda}_p(H_q(X_i\backslash \nu(\gamma_i); \Lambda), \Z)$ that converges to $H_\ast(X_i\backslash \nu(\gamma_i))$ to conclude  that,\begin{equation}\Tor_2^{\Lambda}(H_0(X_i\backslash \nu(\gamma_i); \Lambda), \Z) = \Tor^{\Lambda}_2(\Z, \Z) = H_2(\Z, \Z) = 0.\end{equation}Therefore,\begin{equation}H_2(X_i\backslash \nu(\gamma_i); \Z) = H_2(X_i\backslash \nu(\gamma_i); \Lambda)\otimes_{\Lambda} \Z\end{equation}are isomorphic for $i= 1, 2$. Since $X_i$ is the union of $X_i\backslash \nu(\gamma_i))$ and $S^1\times D^3$, we conclude that the $\Lambda$-valued intersection forms $\lambda_{X_1}$ and $\lambda_{X_2}$ are isometric by \cite[Proposition 3.9]{[ConwayPowell]}. A result of Freedman-Quinn \cite[Theorem 10.7A]{[FreedmanQuinn]} implies that $X_1$ is homeomorphic to $X_2$. 
\end{remark}

\subsection{Three pairs of geometrically dual 2-tori in the 4-torus, some useful diffeomorphisms and a homeomorphism}\label{Section 2-tori in the 4-torus} We proceed to build several 4-manifolds from the 4-torus that will be useful in determining the diffeomorphism types in the proof of Theorem \ref{Theorem A}. Define\begin{equation}\label{4-torus}T^4 = x\times y\times a\times b,\end{equation}where $x, y, a, b$ is each a copy of $S^1$ and their homotopy classes generate the free abelian group\begin{equation}\pi_1(T^4) = \Z^4 = \Z x\oplus \Z y \oplus \Z a \oplus \Z b.\end{equation} We will follow the notation of Baldridge-Kirk \cite[Section 2.2]{[BaldridgeKirk2]} and denote by $X$ and $Y$ parallel push offs of $x$ and $y$, respectively, in $x\times y\times \{p_a\}\times \{p_b\}$ for points $p_a\in a$ and $p_b\in b$. Analogously, we let $A_1, A_2$ and $A_3$, and $B$ be parallel push offs of $a$ and $b$, respectively in $\{p_x\}\times \{p_y\}\times a\times b$ for points $p_x\in x$ and $p_y\in y$; see \cite[Figure 1]{[BaldridgeKirk2]}. 

We define three disjoint submanifolds of (\ref{4-torus}). The first two are\begin{equation}\label{Torus 1} T_1:= X\times \{p_y\}\times A_1\times \{p_b\},\end{equation}and\begin{equation}\label{Torus 2} T_2:= \{p_x\}\times Y\times A_2\times  \{p_b\},\end{equation}
which are chosen exactly as it is done by Baldridge-Kirk in \cite[Section 2.2]{[BaldridgeKirk2]}. The third submanifold is 
\begin{equation}\label{Torus 3} T_3:= \{p_x\}\times \{p_y\} \times A_3 \times B.\end{equation}

The 2-tori (\ref{Torus 1}) (\ref{Torus 2}) (\ref{Torus 3}), together with their geometrically dual 2-tori, generate the second homology group $H_2(T^4; \Z) = \Z^6$. We specify the framings for the 2-tori that were discussed in Section \ref{Section Torus Surgeries} by equipping the 4-torus with the product symplectic structure $(x\times y)\times (a\times b)$. The 2-torus (\ref{Torus 3}) is a symplectic submanifold, while the 2-tori (\ref{Torus 1}) and (\ref{Torus 2}) are Lagrangian with respect to this choice of symplectic structure. The $(p, q, r)$-torus surgeries on (\ref{Torus 1}) and/or (\ref{Torus 2}) are performed with respect to the Lagrangian framing \cite{[HoLi]}. Performing torus surgeries and surgeries along loops to the 4-torus yields the following 4-manifolds. We follow the notation in \cite[Section 2.1]{[BaldridgeKirk2]} and the terminology introduced in Definition \ref{Definition Torus Surgery} and Definition \ref{Definition Product Framing}.

\begin{lemma}\label{Lemma Diffeomorphism 1}Let $n\in \Z$. Let $W_n$ be the 4-manifold that is obtained from the 4-torus by applying a $(1, 0, n)$-torus surgery on $T_1$ along $m = x$ and a $(0, 1, 0)$-torus surgery on $T_3$ along $l = b$. There is a diffeomorphism\begin{equation}\label{Diffeomorphism 1}W_n\approx S^2\times T^2\end{equation}for every $n\in \Z$.

Let $Z_n$ be the 4-manifold that is obtained from the 4-torus by applying a $(1, 0, n)$-torus surgery on $T_1$ along $m = x$ and a surgery along the loop $l = b$ with respect to the product framing induced by $l\subset \nu(T_3)$. There is a diffeomorphism\begin{equation}\label{Diffeomorphism 2}Z_n\approx S^2\times T^2\# S^2\times S^2\end{equation}for every $n\in \Z$.

 Let $M_n$ be the 4-manifold that is obtained from the 4-torus by applying a $(0, 1, 1)$-torus surgery on $T_2$ along $m = y$, a $(1, 0, n)$-torus surgery on $T_1$ along $m = x$, and a $(0, 1, 0)$-torus surgery on $T_3$ along $l = b$. There is a diffeomorphism\begin{equation}\label{Diffeomorphism 5}M_n\approx S^1\times S^3\end{equation}for every $n\in \Z$.

Let $F_n$ be the 4-manifold that is obtained from the 4-torus by applying a $(0, 1, 1)$-torus surgery on $T_2$ along $m = y$, a $(1, 0, n)$-torus surgery on $T_1$ along $m = x$ and a surgery along the loop $l = b$ with respect to the product framing induced by $l\subset \nu(T_3)$. There is a diffeomorphism\begin{equation}\label{Diffeomorphism 6}F_n\approx S^1\times S^3\# S^2\times S^2\end{equation}for every $n\in \Z$.

\end{lemma}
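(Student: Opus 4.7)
Plan: The strategy is a case-by-case analysis exploiting the fact that each surgery torus $T_i$ in $T^4$ admits a geometrically dual 2-torus: namely $Y\times B$ is dual to $T_1$, a parallel copy of $X\times B$ is dual to $T_2$, and $X\times Y$ is dual to $T_3$ (in each case the intersection is transverse in one point). Under the corresponding product decomposition $T^4 = T_i\times T_i^{\vee}$ (with $T_i^{\vee}$ the dual factor), the complement $T^4\setminus\nu(T_i)$ splits as $T_i\times(T_i^{\vee}\setminus D^2)$, and the meridian $\mu_{T_i}$ is precisely the commutator of the two generators of $\pi_1(T_i^{\vee}\setminus D^2) = F_2$.

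For the first claim $W_n \approx S^2\times T^2$, I would perform the surgeries in the order $T_3$ first, then $T_1$; since the tori are disjoint, the order is immaterial. The $(0,1,0)$-torus surgery on $T_3$ along $l = b$ kills $b \in \pi_1$ by capping it off with the new disk $\{pt\}\times D^2$. Consequently $\mu_{T_1} = [Y,B]$ becomes trivial in the fundamental group of the complement of $T_1$ in the intermediate 4-manifold, and in fact an embedded framed disk bounded by $\mu_{T_1}$ appears there. The $(1,0,n)$-torus surgery on $T_1$ along $m = x$ then kills the class $x\cdot\mu_{T_1}^n = x$ independently of $n$; moreover handle slides of the new $T^2\times D^2$ across this disk allow one to vary the multiplicity, so the smooth diffeomorphism type does not depend on $n$. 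A direct handle-calculus computation identifies the $n=0$ model with the trivial $S^2$-bundle over $T^2$, the Lagrangian framing guaranteeing $w_2=0$ and ruling out the twisted bundle.

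For the second claim $Z_n \approx S^2\times T^2\# S^2\times S^2$, the only change is that the $(0,1,0)$-torus surgery on $T_3$ is replaced by a surgery along the loop $l = b$. A loop surgery differs from the corresponding multiplicity-zero torus surgery by an additional $S^2\times S^2$ (or $S^2\widetilde{\times}S^2$) summand: the Euler characteristic increases by $+2$ while the signature stays constant, and the new 2-sphere admits a geometrically dual 2-sphere coming from the remaining framing direction on $T_3$. The hypothesis $w_2(Z_n)=0$ rules out the twisted summand, so $Z_n\approx W_n\# S^2\times S^2\approx S^2\times T^2\# S^2\times S^2$.

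The third and fourth claims follow by adjoining the $(0,1,1)$-torus surgery on $T_2$ along $m=y$. The dual torus $X\times B$ to $T_2$, together with the already-carried-out $T_3$-surgery (respectively loop surgery on $b$) that trivialises $b$, renders $\mu_{T_2}=[X,B]$ trivial; handle slides across an embedded disk bounded by $\mu_{T_2}$ absorb the multiplicity $1$, so the surgery reduces to the $(0,1,0)$-case and kills $y$. For $M_n$ the resulting manifold has $\pi_1=\Z a =\Z$, $\chi=0$, $\sigma=0$, $w_2=0$, identified as $S^1\times S^3$ by direct handle calculus; for $F_n$ the loop-versus-torus discrepancy along $b$ contributes the extra $S^2\times S^2$ summand as in the $Z_n$ case. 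The main obstacle throughout is making the multiplicity-absorption/handle-slide step rigorous at the smooth level: the matching of $\pi_1$, $H_\ast$, $\chi$, $\sigma$, and $w_2$ only yields a topological equivalence, so the smooth diffeomorphism is obtained by explicitly constructing the embedded framed disks bounded by each $\mu_{T_i}$ after the auxiliary surgeries and executing the resulting sequence of handle slides.
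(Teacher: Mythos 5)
Your approach for $W_n$ and $M_n$ is genuinely different from the paper's. The paper reduces to a three-dimensional computation: under the product splitting of $T^4$ as a 3-torus times a circle, the surgeries on $T_1$ and $T_3$ become Dehn surgeries on the 3-torus, and the resulting 3-manifold $M^3$ is identified with $S^2\times S^1$ for every $n$ (citing the Baldridge--Kirk and Torres--Yazinski arguments); hence $W_n\approx M^3\times S^1\approx S^2\times T^2$, and $M_n$ similarly. You instead do the $T_3$-surgery first, claim that afterward the meridian $\mu_{T_1}$ bounds an embedded framed disk in the complement of $T_1$, and use handle slides across this disk to absorb the multiplicity $n$, reducing to the $n=0$ case. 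For $Z_n$ and $F_n$ the two arguments converge: both are an application of Moishezon's lemma identifying a loop surgery with a multiplicity-zero torus surgery followed by a connected sum with $S^2\times S^2$, with $w_2=0$ ruling out the twisted summand.

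The gap is the step you yourself flag at the end. Killing $b$ in $\pi_1$ only makes $\mu_{T_1}=[\tilde b,\tilde y]$ nullhomotopic; it does not by itself produce an embedded disk with the nullhomologous framing. You assert \emph{``in fact an embedded framed disk bounded by $\mu_{T_1}$ appears there''} but give no construction. One expects such a disk to come from capping off a $b$-curve on the punctured torus cobounding $\mu_{T_1}$ in $T^4\setminus\nu(T_1)$ using the disk $\{pt\}\times D^2$ created by the $T_3$-surgery, but the $b$-curve on that punctured torus sits at a different $a$-coordinate from $T_3$, so embeddedness and the framing both require an explicit isotopy and framing check. Likewise, the statement that handle slides of the reglued $T^2\times D^2$ across the disk change the surgery multiplicity is a real lemma (essentially that log transforms on a torus whose meridian bounds a correctly framed disk are trivial), not an immediate observation, and it is neither proved nor cited. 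Until these two steps are made precise, your argument establishes only homotopy-theoretic invariants of $W_n$, which is not enough in dimension four. The paper's route through 3-manifold Dehn surgery is precisely a way to sidestep both issues, since there the independence of $n$ is a routine computation on a Seifert-fibered 3-manifold.
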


\begin{proof} Any diffeomorphism $f:M^3\rightarrow M^3$ of a 3-manifold $M^3$ extends to a diffeomorphism $f\times \id:M^3\times S^1\rightarrow M^3\times S^1$. To show the existence of the diffeomorphism (\ref{Diffeomorphism 1}), after fixing an $n\in \Z$, we use a standard argument used by Baldridge-Kirk in \cite[Proof Lemma 2]{[BaldridgeKirk2]}. The 4-manifold $W_n$ is diffeomorphic to the product $M^3\times S^1 $ of a 3-manifold $M^3$ with the circle. The 3-manifold $M^3$ fibers over the circle with monodromy given by $D_x^n\circ D_b^0$, where $D_x$ and $D_b$ are Dehn twists along $x$ and $b$ in $T^2$. These Dehn twists correspond to an application of an $(1, n)$-Dehn surgery along $m = x$ and a $(1, 0)$-Dehn surgery along $l = b$ to the 3-torus $x\times y\times b$; cf. (\ref{4-torus}). It is immediate to see that $M^3$ is diffeomorphic to $S^2\times S^1$. The loop $y$ generates the fundamental group $\pi_1(M^3) = \Z y$. We conclude $W_n\approx (S^2\times S^1)\times S^1$. Since the choice of $n$ was arbitrary, the existence of the diffeomorphism (\ref{Diffeomorphism 1}) follows. Adapting the previous argument to the construction of $M_n$ allows us to conclude the existence of the diffeomorphism (\ref{Diffeomorphism 5}).

We argue the existence of the diffeomorphism (\ref{Diffeomorphism 2}) using a classical argument due to Moishezon \cite[Lemma 13]{[Moishezon]} (see the exposition in \cite[Lemma 3]{[Gompf2]} too). Fix an integer $n\in \Z$, and let $N_n = T^4_{T_1, b}(1, 0, n)$ be the 4-manifold that is obtained from the 4-torus by applying a $(1, 0, n)$-torus surgery on $T_1$ along $m = x$. Part of the discussion in the previous paragraph implies that $N_n \approx X_n\times S^1$ for a 3-manifold $X_n$ that fibers over the circle with monodromy given by the corresponding Dehn twist $D_x^n$. The 4-manifold $Z_n$ is obtained from $N_n$ by performing surgery along the essential loop $l = b\subset T_3$ with the framing that is induced by the product structure of $\nu(T_3)= T^2\times D^2$; the 4-manifold $Z_n$ corresponds to $N^\ast$ in Gompf's notation in \cite[proof of Lemma 3]{[Gompf2]}. The surgery along $l = b$ with the chosen framing does not change the intersection form over the integers nor the second Stiefel-Whitney class. In particular, $Z_n = \hat{N}_n\# S^2\times S^2$ for some 4-manifold $\hat{N}_n$. As Gompf explains \cite[Proof of Lemma 3]{[Gompf2]}, performing surgery to $N^\ast =Z_n$ on one of these 2-spheres yields the 4-manifold $\hat{N}_n$, and the aforementioned argument of Moishezon implies that $\hat{N}_n$ is obtained from $N_n$ by performing a $(0, 1, 0)$-torus surgery on $T_3$ along $l = b$, i.e.,\begin{equation}\hat{N}_n = (N_n)_{T_3, b}(0, 1, 0) = W_n \approx S^2\times T^2\end{equation}by (\ref{Diffeomorphism 1}). We conclude that the 4-manifold $Z_n$ is diffeomorphic to $S^2\times T^2\#S^2\times S^2$. Since $n$ was arbitrary, we conclude that there is a diffeomorphism (\ref{Diffeomorphism 2}) for every $n\in \Z$. Adapting the previous argument to the construction of $F_n$ allows us to conclude the existence of the diffeomorphism (\ref{Diffeomorphism 6}). This concludes the proof of Lemma \ref{Lemma Diffeomorphism 1}.

 \end{proof}

We record several properties of our principal building block in the proof of Theorem \ref{Theorem A}. 

\begin{lemma}\label{Lemma Building Block}Denote by $\{W_n(1): n\in \Z\}$ the infinite set of 4-manifolds that are obtained by applying a $(1, 0, n)$-torus surgery on $T_1$ along $m = x$ to the 4-torus for any $n\in \Z - \{0\}$; cf. Lemma \ref{Lemma Diffeomorphism 1}. Since the 2-tori $T_2, T_3\subset T^4$ are disjoint from the surgery, we conclude that $T_2, T_3\subset W_n(1)$ for every $n\in \Z$. These 2-tori and their geometric duals generate the second homology group $H_2(W_n(1); \Z) = \Z^4$ and the intersection form over the integers is\begin{equation}Q_{W_n(1)} = \overset{2}{\underset{i = 1}\bigoplus} \left( \begin{array}{cc}
0 & 1  \\
1 & 0 \end{array} \right).\end{equation} Moreover, the 4-manifold $W_n(1)$ is aspherical for every $n\in \Z - \{0\}$.
\end{lemma}

The claim regarding asphericity follows from a  modification to an argument of Baldridge-Kirk \cite[Lemma 2]{[BaldridgeKirk2]}: the 4-manifold $W_n(1)$ is aspherical since it is a product $W_n(1) = S^1\times U$ of the circle and an aspherical 3-manifold $U$ for every $n\in \Z - \{0\}$. 

We finish this section with a discussion of an application of work of Freedman-Quinn \cite{[FreedmanQuinn]} and a  homeomorphism criteria that is employed in the assembly of the 4-manifolds for the proof of Theorem \ref{Theorem A} in Section \ref{Section Proof of Theorem A}. 

\begin{lemma}\label{Lemma Intersection Form}Let $X$ be a closed connected simply connected 4-manifold that contains a smoothly embedded 2-torus $T\hookrightarrow X$ of self-intersection $[T]^2 = 0$ and with simply connected complement $\pi_1(X\setminus \nu(T)) = \{1\}$. For any $n\in \Z$, consider the fiber sum\begin{equation}\label{Desc}X_n(1):= (X\setminus \nu(T))\cup_{\phi} (W_n(1)\setminus \nu(T_2))\end{equation}where the gluing diffeomorphism\begin{equation}\label{Desc Glue}\phi:\partial (X\setminus \nu(T))\rightarrow \partial (W_n(1)\setminus\nu(T_2))\end{equation}maps the meridian $\mu_T$ in $X\setminus \nu(T)$ to the meridian $\mu_{T_2}$ in $W_n(1)\setminus \nu(T_2)$. The closed connected orientable 4-manifold $X_n(1)$ has infinite cyclic fundamental group, and its $\Lambda$-intersection form is extended from the integers. In particular, $X_n(1)$ is homeomorphic to $X\#S^2\times S^2\#S^1\times S^3$ for every $n \in \Z$.

\end{lemma}

A more precise description of the 4-manifold (\ref{Desc}) and the gluing diffeomorphism (\ref{Desc Glue}) that makes it clear that its Seiberg-Witten invariant is non-trivial is given in Section \ref{Section Proof of Theorem A}.

\begin{proof} The hypothesis $\pi_1(X) = \{1\} = \pi_1(X\setminus \nu(T))$ and the Seifert-van Kampen theorem imply that $\pi_1(X_n(1)) = \Z$ for every $n\in \Z$. If the second Stiefel-Whitney class of $X$ satisfies $w_2(X) = 0$, then $w_2(X(1)_n) = 0$ for every $n\in \Z$ by a result of Gompf \cite[Proposition 1.2]{[Gompf3]}. Fix an $n\in \Z$, and let $\Lambda = \Z[t^\pm]$. For 4-manifolds with infinite cyclic fundamental group, the homology $H_\ast(X_n(1); \Lambda)$ is computed as the homology of the universal cover as a $\Lambda$-module. Since $W_n(1)$ is aspherical, its universal cover of $W_n(1)$ is contractible, and we have that $H_2(\widetilde{X_n(1)}; \Z) = \Lambda^{b_2(X) + 2}$ \cite{[FreedmanQuinn]} and\begin{equation}H_2(X_n(1); \Lambda) = (H_2(X; \Z)\oplus \Z^2)\otimes_{\Z} \Lambda.\end{equation}

Using the $\Z$-valued intersection form of $W_n(1)$ of Lemma \ref{Lemma Building Block}, we compute the $\Lambda$-valued intersection form to be\begin{equation}\lambda_{X_n(1)} = \Big(Q_X\oplus \left( \begin{array}{cc}
0 & 1  \\
1 & 0 \end{array} \right)\Big)\otimes_{\Z} \Lambda.\end{equation}Since $n\in \Z$ was arbitrary, a result of Freedman-Quinn \cite[Theorem 107.A (2)]{[FreedmanQuinn]}, \cite{[StongWang]} implies that every member in the infinite set $\{X_{n}(1): n\in \Z\}$ is homeomorphic to the connected sum $X\#S^2\times S^2\#S^1\times S^3$.
\end{proof}

\subsection{Examples out of Reverse-Engineering of 4-manifolds}\label{Section RE}We now describe how Fintushel-Park-Stern's work can be used to produce examples of infinitely many smoothly non-isotopic nullhomologous 2-tori inside $S^2\times S^2$ and nullhomologous 2-spheres inside $S^2\times S^2\# S^2\times S^2$. In \cite[Section 4]{[FintushelParkStern]}, Fintushel-Park-Stern construct an infinite set $\{X_n: \Sw_{X_n} = n\in \Z\}$ of pairwise non-diffeomorphic closed smooth 4-manifold such that $X_n$ is homologically equivalent to $S^2\times S^2$ for every $n\in \Z$. We briefly recall their construction and set-up some notation. Let $\{a_1, b_1, a_2, b_2\}$ and $\{c_1, d_1, c_2, d_2\}$ be loops whose homotopy classes form a standard set of generators for the fundamental group $\pi_1(\Sigma_2)\times \pi_1(\Sigma_2) = \pi_1(\Sigma_2\times\Sigma_2)$, where $\Sigma_g$ is a closed orientable surface of genus two. Push-offs of a loop $a_i\subset \Sigma_2$ are denoted by $a_i'$ and $a_i''$. Once the 4-manifold $\Sigma_2\times \Sigma_2$ is equipped with the canonical product symplectic form, Fintushel-Park-Stern choose the following eight disjoint homologically essential Lagrangian 2-tori and surgery curves: 
\begin{center}
\begin{itemize}
\item $T_1: = a_1'\times c_1'$, $m_1 = a_1'$, $T_2: = a_2'\times c_1'$, $l_2 = c_1'$
\item $T_3: = a_2''\times d_1'$, $l_3 = d_1'$, $T_4: = b_1'\times c_1''$, $m_4 = b_1'$
\item $T_5: = a_2'\times c_2'$, $m_5 = a_2'$, $T_6: = a_1'\times c_2'$, $l_6 = c_2'$
\item $T_7: = a_1''\times d_2'$, $m_5 = d_2'$, and $T_8: = b_2'\times c_2''$, $m_1 = b_2'$.
\end{itemize}
\end{center}

Fintushel-Park-Stern construct an infinite set\begin{equation}\label{Homology 4-manifolds}\{X(1, n): SW_{X(1, n)} = n\in \Z\}\end{equation}of pairwise non-diffeomorphic closed smooth 4-manifolds that are homologically equivalent to the connected sum $2(S^2\times S^2)\# S^1\times S^3$ with infinite cyclic first homology group\begin{equation}\label{First Homology Generator}H_1(X(1, n); \Z) = \Z b_2\end{equation}by applying to $\Sigma_2\times \Sigma_2$ the following torus surgeries with respect to the Lagrangian framing 

\begin{center}
\begin{itemize}
\item  $(1, 0, -1)$-torus surgery on $T_1$ along $\gamma_1 = a_1'$,
\item $(0, 1, 1)$-torus surgery on $T_2$ along the surgery curve $\gamma_2 = c_1'$,
\item $(0, 1, 1)$-torus surgery on $T_3$ along the surgery curve $\gamma_3 = d_1''$,
\item $(1, 0 - 1)$-torus surgery on $T_4$ along the surgery curve $\gamma_4 = b_1'$,
\item $(1, 0 - 1)$-torus surgery on $T_5$ along the surgery curve $\gamma_5 = a_2'$,
\item $(0, 1, 1)$-torus surgery on $T_6$ along the surgery curve $\gamma_6 = c_2'$, and
\item $(0, 1, n)$-torus surgery on $T_7$ along the surgery curve $\gamma_7 = d_2'$.
\end{itemize}
\end{center}

For each $n\in \Z$, the corresponding 4-manifold contains the eighth 2-torus\begin{equation}\label{Eighth torus}T_8\subset X(1, n),\end{equation}which carries a loop whose homotopy/homology class corresponds to the generator $b_2$ in (\ref{First Homology Generator}). We build a 4-manifold $X(0, n)$ by applying a

$\bullet$ $(1, 0, 0)$-torus surgery to $X(1, n)$ on $T_8$ along the surgery curve $\gamma_8 = b_2'$.

Similarly, build a 4-manifold $Y(0, n)$ by applying 

$\bullet$ a surgery to $X(1, n)$ along the curve $\gamma_8 = b_2'$ using the 2-torus $T_8$ to induce the product framing on $\gamma_8$ as in Definition \ref{Definition Product Framing}. The following diffeomorphism types are obtained this way.

\begin{lemma}\label{Lemma FintushelParkStern}There are diffeomorphisms\begin{equation}\label{Diffeomorphism FPS 1}X(0, n)\approx S^2\times S^2\end{equation}and\begin{equation}\label{Diffeomorphism FPS 2}Y(0, n)\approx S^2\times S^2\# S^2\times S^2\end{equation}for every $n\in \Z$.
\end{lemma}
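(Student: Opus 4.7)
The plan is to adapt the template of the proof of Lemma \ref{Lemma Diffeomorphism 1} to the reverse-engineered 4-manifolds of Fintushel-Park-Stern. The 4-manifold $X(0,n)$ is obtained from $\Sigma_2 \times \Sigma_2$ by performing a total of eight surgeries: the seven torus surgeries on $T_1,\ldots, T_7$ used to produce $X(1,n)$, together with the additional $(1,0,0)$-torus surgery on $T_8$ along $\gamma_8 = b_2'$. Similarly, $Y(0,n)$ is obtained by substituting a surgery along the loop $\gamma_8 = b_2'$ for the last torus surgery.

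First I would verify that the fundamental groups are trivial. By (\ref{First Homology Generator}) and the construction of $X(1,n)$, the seven initial surgeries leave $\pi_1(X(1,n)) = \Z b_2$. The multiplicity-zero piece of the $(1,0,0)$-torus surgery on $T_8$ along $b_2'$ introduces the relation $b_2 = 1$ (the surgery curve, which represents $b_2$, bounds in the new manifold), so that $\pi_1(X(0,n)) = 1$. The same mechanism applies to $Y(0,n)$, since surgery along a loop carrying the generator kills it in $\pi_1$. Next I would collect the characteristic numbers. Torus surgeries preserve $\chi$ and $\sigma$, so $\chi(X(0,n)) = 4$ and $\sigma(X(0,n)) = 0$; an ordinary surgery along a loop changes $\chi$ by $+2$, giving $\chi(Y(0,n)) = 6$ and $\sigma(Y(0,n)) = 0$. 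Choosing framings throughout so that $w_2 = 0$, as described in Section \ref{Section Spherical Modifications}, Freedman's theorem yields a \emph{homeomorphism} from $X(0,n)$ to $S^2 \times S^2$ and from $Y(0,n)$ to $S^2 \times S^2 \# S^2 \times S^2$.

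The main obstacle is upgrading these homeomorphisms to smooth diffeomorphisms, and this is the step that genuinely requires the construction. My approach would be to reprise the Moishezon-Gompf strategy already invoked in the proof of Lemma \ref{Lemma Diffeomorphism 1}: once the eighth surgery renders $b_2'$ nullhomotopic in the complement of the tori $T_1,\ldots,T_7$, each of the Lagrangian torus surgeries on those $T_i$ can be reorganized, up to diffeomorphism, as a surgery along a nullhomotopic loop in a fibered model built from $\Sigma_2\times\Sigma_2$. By \cite[Lemma 13]{[Moishezon]} (cf.\ \cite[Lemma 3]{[Gompf2]}), each such surgery either preserves the diffeomorphism type of the ambient manifold or introduces an additional $S^2 \times S^2$ (or $S^2 \widetilde{\times} S^2$) summand. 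Matching with the invariants computed above then pins down the smooth diffeomorphism types stated in (\ref{Diffeomorphism FPS 1}) and (\ref{Diffeomorphism FPS 2}).

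The most delicate point — and what I expect to be the real difficulty — is to ensure that only the standard $S^2 \times S^2$ summand, and not the twisted bundle $S^2 \widetilde{\times} S^2$, is produced at each stage. This is controlled by tracking $w_2$ through the sequence of surgeries: since the original Lagrangian framings on the $T_i$ are induced by the product symplectic structure on $\Sigma_2\times\Sigma_2$, they yield an even intersection form, and the framings for the $(1,0,0)$-torus surgery on $T_8$ and the loop surgery used to build $Y(0,n)$ can be chosen, as in Section \ref{Section Spherical Modifications}, so that $w_2$ remains trivial. With this bookkeeping in place, the Moishezon-Gompf argument forces the smooth diffeomorphism type to be the standard one, completing the proof.
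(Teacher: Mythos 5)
Your outline of the first two steps — the fundamental group computation and the matching of $\chi$, $\sigma$, and $w_2$ with $S^2\times S^2$ and $S^2\times S^2\# S^2\times S^2$, followed by an appeal to Freedman — is sound and agrees in spirit with what the paper would need. The problem is in the upgrade to a diffeomorphism, which is where all the work is, and where your argument has a genuine gap.

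You claim that once the eighth surgery kills $b_2'$, ``each of the Lagrangian torus surgeries on those $T_i$ can be reorganized, up to diffeomorphism, as a surgery along a nullhomotopic loop in a fibered model built from $\Sigma_2\times\Sigma_2$,'' and you invoke Moishezon's Lemma 13 and Gompf's Lemma 3 for the conclusion. Those lemmas only apply to multiplicity-zero torus surgeries of type $(p,q,0)$: they say such a surgery decomposes as a loop surgery followed by a surgery on an embedded $2$-sphere. The seven surgeries on $T_1,\dots,T_7$ in the Fintushel--Park--Stern construction are Luttinger surgeries of multiplicities $\pm 1$ (and one of multiplicity $n$); Moishezon's lemma says nothing about them, and there is no statement in the cited sources that a Luttinger surgery whose curve later becomes nullhomotopic degenerates into a loop surgery. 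Moreover, the fibered-product argument you allude to is the $T^4 \cong M^3\times S^1$ device used in the proof of Lemma \ref{Lemma Diffeomorphism 1}; no analogous fibered model is available for $\Sigma_2\times\Sigma_2$, so the phrase ``a fibered model built from $\Sigma_2\times\Sigma_2$'' does not correspond to an actual construction. As written, your step 3 is unsupported, and the subsequent bookkeeping of $w_2$ and summand types inherits this gap.

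The paper proves the lemma by a different route. It fixes $n$ and tracks four explicit surfaces in $X(1,n)$: a pair of geometrically dual genus-two surfaces $\Sigma, \Sigma^D$ of self-intersection zero (descended from the two factor surfaces of $\Sigma_2\times\Sigma_2$, which survive the torus surgeries because those are local) and the geometrically dual pair of $2$-tori $T_8, T_8^D$. It then argues that the spherical modification along $\gamma_8 = b_2$ turns each pair into a pair of geometrically dual $2$-spheres, exhibiting $Y(0,n)$ directly as $S^2\times S^2\# S^2\times S^2$. Finally, Moishezon's lemma is applied only to the $(1,0,0)$-torus surgery on $T_8$ itself: that surgery equals a loop surgery (producing $Y(0,n)$) followed by a surgery on a $2$-sphere, which destroys one of the $S^2\times S^2$ summands and yields $X(0,n)\approx S^2\times S^2$. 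So where you try to reinterpret all eight surgeries via Moishezon, the paper uses it for exactly one of them and otherwise argues geometrically by following surfaces. To repair your proof you would need either to establish that Luttinger surgeries on tori whose surgery curves become nullhomotopic are trivial up to stabilization (a nontrivial claim you would have to prove or cite precisely), or to switch to the surface-tracking argument the paper actually uses.
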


An argument to prove Lemma \ref{Lemma FintushelParkStern} can be found in \cite[Proposition 6]{[FintushelStern2]}.\begin{hproof}We outline an argument to prove Lemma \ref{Lemma FintushelParkStern} that is based on handlebody calculus and which builds on work of Akbulut in \cite{[Akbulut2], [Akbulut3], [Akbulut4]}. Akbulut draws a handlebody diagram of the handle decomposition of a genus two surface bundle over a genus two surface in \cite[Figure 2]{[Akbulut2]}, and the following small modification to it yields a handlebody of the product $\Sigma_2\times \Sigma_2$ of a pair of  surfaces of genus two. Deconstruct the latter into two copies of the product of a genus two surface and a punctured 2-torus\begin{equation}\label{E0}\Sigma_2\times (T^2\backslash  D^2)\end{equation}glued along the common $\Sigma_2 \times S^1$ boundary\begin{equation}\label{Product Manifold}\Sigma_2\times \Sigma_2 = ((\Sigma_2\times T^2)\backslash \Sigma_2\times D^2) \cup ((\Sigma_2\times T^2)\backslash \Sigma_2\times D^2).\end{equation}That is, (\ref{Product Manifold}) is the double of (\ref{E0}) and a handlebody of the latter is constructed in \cite[Section 3, Figure 11]{[Akbulut2]}. Akbulut calls this 4-manifold $E_0$ in his paper. In order to make the surgery 2-tori visible, Akbulut glues two copies of (\ref{E0}) using a cylinder $\Sigma_2\times S^1\times [0, 1]$ as\begin{equation}\label{Product Manifold 2}\Sigma_2\times \Sigma_2 = ((\Sigma_2\times T^2)\backslash \Sigma_2\times D^2) \cup_{\id}(\Sigma_2\times S^1\times [0, 1])\cup_{\id^{-1}} ((\Sigma_2\times T^2)\backslash \Sigma_2\times D^2).\end{equation}A handlebody diagram for the handle decomposition of (\ref{Product Manifold 2}) is drawn by substituting the lower part of \cite[Figure 23]{[Akbulut2]} with a copy of the upper part, i.e., two copies of (\ref{E0}). The eight torus surgeries that are performed to $\Sigma_2\times \Sigma_2$ to construct $X(0, n)$ as  indicated in Section \ref{Section RE} are divided in two sets of four torus surgeries. Each set is performed to a copy of (\ref{E0}) in the decomposition (\ref{Product Manifold 2}). Akbulut constructs a handlebody diagram of the handle decomposition of the 4-manifold $\tilde{E}_0$ that is obtained from (\ref{E0}) by applying four torus surgeries with coefficients $(1, 0, 1)$ or $(0, 1, 1)$ in \cite[Figure 6]{[Akbulut3]}; the coefficients of these torus surgeries correspond to Luttinger surgeries \cite{[AurouxDonaldsonKatzarkov], [Luttinger]}. If we were to glue two copies of the handlebody of $\tilde{E}_0$, we would obtain a handlebody of an irreducible symplectic 4-manifold that is homology equivalent to $S^2\times S^2$. Instead of doing so, we modify the handlebody of $\tilde{E}_0$ in \cite[Figure 6]{[Akbulut3]} to obtain a handlebody of the 4-manifold $\hat{E}_0$ that is obtained from (\ref{E0}) by applying two Luttinger surgeries, a $(0, 1, n)$-torus surgery, and a $(1, 0, 0)$-torus surgery; see Section \ref{Section RE}. This consist in changing the framing coefficients in \cite[Figure 6]{[Akbulut3]} of the 2-handles involved in the torus surgeries. A handlebody diagram of $X(0, n) = \tilde{E}_0\cup \hat{E}_0$ for fixed $n\in \Z$ is constructed from a copy of \cite[Figure 6]{[Akbulut3]} and its modified handlebody diagram. Several handle slides pivoted on the 0-framed 2-handles unlink the diagram, and handle cancellations allows us to conclude that $X(0, n)$ is diffeomorphic to $S^2\times S^2$. Surgering a circle-dot 1-handle in the handlebody diagram of $X(0, n)$ to a 0-framed 2-handle allows us to conclude that $Y(0, n)$ is diffeomorphic to $S^2\times S^2\#S^2\times S^2$.

\end{hproof}

\subsection{Fintushel-Stern's invariant of 2-tori}\label{Section SW Invariants} In this section we recall the invariant that we will use to distinguish the 2-tori of Theorem \ref{Theorem A} and follow almost verbatim the exposition in \cite[Section 2]{[FintushelSternTori]}. The Seiberg-Witten invariant of a smooth closed oriented 4-manifold $X$ is a map $\Sw_X':\mathcal{S}_X\rightarrow \Z$ from the set $\mathcal{S}_X$ of isomorphism classes of $\Spin^{\C}$-structures on a closed 4-manifold $X$ to the integers. We fix an orientation on $H^0(X; \Z)\otimes \det H^2_+(X; \R)\otimes \det H^1(X; \R)$, hence a sign of $\Sw_X'$ in what follows; see \cite[Remark 1.2]{[MorganMrowkaSzabo]}. Fintushel-Stern \cite{[FintushelSternTori]} use the modified Seiberg-Witten invariant\begin{equation}\Sw_X: \{k\in H_2(X; \Z): k = w_2(X) \mod 2\}\rightarrow \Z\end{equation}defined by\begin{equation}\Sw_X(k) = \sum_{c(\mathfrak{s}) = k}\Sw_X'(\mathfrak{s})\end{equation}where $c(\mathfrak{s})\in H_2(X; \Z)$ is the Poincar\'e dual to the first Chern class $c_1(W^+_{\mathfrak{s}})$ of the bundle of positive spinors $W^+_{\mathfrak{s}}$ over $X$ that corresponds to the $\Spin^\C$-structure $\mathfrak{s}$. Denote by $\{\pm \beta_1, \ldots, \pm \beta_n\}$ the set of basic classes of $X$ and regard the Seiberg-Witten invariant of $X$ as an element of the integral group ring $\Z H_2(X)$ in terms of the Laurent polynomial\begin{equation}\mathcal{SW}_X = \Sw_X(0) + \sum^n_{j = 1} \Sw_X(\beta_j)\cdot (t_{\beta_j} + (-1)^{(\chi + \sigma)/4)}t^{-1}_{\beta_i})\in \Z H_2(X)\end{equation}for $t_{\beta_j}$ the element in the group ring that corresponds to $\beta_j\in H_2(X)$, the Euler characteristic of $X$ is denoted by $\chi$ and the signature by $\sigma$.


The Seiberg-Witten invariant of a 4-manifold $X_{T, \gamma}(p, q, r)$ that is obtained by performing a torus surgery along $T\subset X$ as defined in Section \ref{Section Torus Surgeries} is calculated using the Morgan-Mrowka-Szab\'o formula \cite{[MorganMrowkaSzabo]} given by\begin{equation}\label{Equation 1}
        \begin{split}
        \sum_i\Sw_{X_{T, \gamma}(p, q, r)}(k_{(p, q, r)} + i[T_{p, q, r}]) =
         p\sum_i \Sw_{X_{T, \gamma}(1, 0, 0)}(k_{(1, 0, 0)}  + i [T_{(1, 0, 0)}]) + \\
        + q \sum_i \Sw_{X_{T, \gamma}(0, 1, 0)}(k_{(0, 1, 0)} + i [T_{(0, 1, 0)}]) + r \sum_i \Sw_{X_{T, \gamma}(0, 0, 1)}(k_{(0, 0, 1)} + i [T_{(0, 0, 1)}]).\ 
        \end{split}
    \end{equation}In (\ref{Equation 1}), we denote by $T_{(a, b, c)}$ the core 2-torus of $X_{T, \gamma}(a, b, c)$   for any $a, b, c\in \Z$ and $k_{(a,b,c)}\in H_2(X_{T, \gamma}(a,b,c))$ is any class that agrees with the restriction of a given class $k\in H_2(X)$ in $H_2(X\setminus \nu(T), \partial)$ in the diagram\begin{equation}\label{Equation 2}
\begin{CD}H_2(X_{T, \gamma}(p,q,r)) @>>> H_2(X_{T, \gamma}(p, q, r), T\times D^2)\\@.     @VV\cong V\\@.   H_2(X\setminus T\times D^2, \partial)\\ @.    @VV\cong V \\
H_2(X) @>>> H_2(X, T\times D^2).
\end{CD}
\end{equation}

There is an indeterminacy in the formula (\ref{Equation 1}) due to multiples of the core 2-tori $[T_{a, b, c}]$, and one removes it as follows. From the diagram (\ref{Equation 2}), we obtain both a map\begin{equation}\label{Composition Maps}\pi(a,b,c): H_2(X_{T, \gamma}(a,b,c))\rightarrow H_2(X\setminus T\times D^2, \partial)\end{equation}and the induced map on integral group rings\begin{equation}\label{Composition Maps2}\pi(a,b,c)_\ast : \Z H_2(X_{T, \gamma}(a,b,c))\rightarrow \Z H_2(X\setminus T\times D^2, \partial),\end{equation}and work with the invariant\begin{equation}\label{New Invariant}\overline{\mathcal{SW}}_{(X, T)} = \pi(a,b,c)_\ast(\mathcal{SW}_{X_{T, \gamma}}(a,b,c))\in \Z H_2(X\setminus T\times D^2, \partial).\end{equation}Using (\ref{New Invariant}), we rewrite formula (\ref{Equation 1}) as\begin{center}$\overline{\mathcal{SW}}_{(X_{T, \gamma}(p,q,r), T_{(p,q,r)})}  =$\end{center}\begin{center}$= p \cdot \overline{\mathcal{SW}}_{(X_{T, \gamma}(1,0,0), T_{(1,0,0)})} + q \cdot\overline{\mathcal{SW}}_{(X_{T, \gamma}(0,1,0), T_{(0,1,0)})}+ r \cdot \overline{\mathcal{SW}}_{(X_{T, \gamma}(0,0,1), T_{(0,0,1)})}$.\end{center}

Once the indeterminacy in (\ref{Equation 1}) has been removed, Fintushel-Stern used this collection of Seiberg-Witten invariants to define an invariant of the pair $(X, T)$ as follows.

\begin{definition}\label{Definition FS Invariant for 2-Tori}\cite[p. 951]{[FintushelSternTori]}. Let $T$ be an embedded 2-torus in a closed oriented 4-manifold $X$ such that $[T]^2 = 0$. The Fintushel-Stern invariant of the pair $(X, T)$ is\begin{equation}\mathcal{I}(X, T) := \{\overline{\mathcal{SW}}_{(X_{T, \gamma}(a,b,c), T_{(a,b,c)})} : a, b, c \in \Z\}.\end{equation}

\end{definition}

We finish this section with the following proposition, which will be used to discern the 2-tori of Theorem \ref{Theorem A}. 

\begin{proposition}\label{Proposition FS Invariant for 2-Tori}Fintushel-Stern \cite[Proposition 2.1]{[FintushelSternTori]}. Let $T_i\hookrightarrow X$ be an embedded nullhomolgous 2-tori for $i = 1, 2$ inside a 4-manifold $X$ with a fixed homology orientation, and with $b_2^+(X\setminus \nu(T_i)) > 1$. If $\mathcal{I}(X, T_1)\neq \mathcal{I}(X, T_2)$, then there is no diffeomorphism of pairs $(X, T_1)\rightarrow (X, T_2)$.

\end{proposition}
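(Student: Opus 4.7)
The plan is to prove the contrapositive. Suppose there is a diffeomorphism of pairs $\phi \colon (X, T_1) \to (X, T_2)$; I will show $\bar{\mathcal{I}}(X, T_1) = \bar{\mathcal{I}}(X, T_2)$, and in the nullhomologous case $\mathcal{I}(X, T_1) = \mathcal{I}(X, T_2)$. The proposition then follows by negation.

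First I would promote $\phi$ to a diffeomorphism of tubular neighborhoods. Since $[T_i]^2 = 0$, each $\nu(T_i) \cong T^2 \times D^2$, and after a small isotopy supported near $T_2$ I may assume $\phi(\nu(T_1)) = \nu(T_2)$. Restriction then gives a diffeomorphism $\phi^\circ \colon X \setminus \nu(T_1) \to X \setminus \nu(T_2)$ of complements, which carries the meridian $\mu_{T_1}$ to $\mu_{T_2}$, and carries any chosen framing curves $\{m_1, l_1\}$ on $\partial \nu(T_1)$ to a set of framing curves on $\partial \nu(T_2)$. For any surgery curve $\gamma \subset T_1$ and any surgery coefficients $(p,q,r) \in \mathbb{Z}^3$, the construction~(\ref{Torus Surgery}) yields
\begin{equation*}
X_{T_1, \gamma}(p,q,r) = (X \setminus \nu(T_1)) \cup_{\varphi_1} (T^2 \times D^2),
\end{equation*}
and the corresponding gluing on the $T_2$ side via $\phi^\circ(\gamma)$ determines $X_{T_2, \phi(\gamma)}(p,q,r)$. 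Because $\phi^\circ$ matches meridians with meridians and the chosen framings with their images, the gluings satisfy~(\ref{Induced Map in Homology}) with the same $(p, q, r)$, so $\phi^\circ$ extends (via the identity on the $T^2 \times D^2$ factor) to a diffeomorphism of pairs
\begin{equation*}
\Phi \colon \bigl(X_{T_1, \gamma}(p,q,r),\, (T_1)_{(p,q,r)}\bigr) \longrightarrow \bigl(X_{T_2, \phi(\gamma)}(p,q,r),\, (T_2)_{(p,q,r)}\bigr)
\end{equation*}
that sends one core torus onto the other.

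Next I would invoke diffeomorphism invariance of Seiberg-Witten invariants. The hypothesis $b_2^+(X \setminus \nu(T_i)) > 1$ forces $b_2^+(X_{T_i, \cdot}(p,q,r)) > 1$ for every $(p,q,r)$, so the integer-valued $SW'$ is well-defined and a diffeomorphism invariant after fixing a homology orientation. Thus $\Phi$ induces a bijection on $\Spin^{\mathbb{C}}$-structures preserving $SW'$, and consequently identifies the Laurent polynomials $\mathcal{SW}$ of~(\ref{Laurent Polynomial}) via $\Phi_\ast \colon \mathbb{Z} H_2(X_{T_1, \gamma}(p,q,r)) \to \mathbb{Z} H_2(X_{T_2, \phi(\gamma)}(p,q,r))$. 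Since by construction the complements (\ref{Complement}) are identified by $\phi^\circ$, the projections $\pi(p,q,r)_\ast$ commute with $\Phi_\ast$, giving
\begin{equation*}
\overline{\mathcal{SW}}_{(X_{T_1, \gamma}(p,q,r), (T_1)_{(p,q,r)})} = \overline{\mathcal{SW}}_{(X_{T_2, \phi(\gamma)}(p,q,r), (T_2)_{(p,q,r)})}
\end{equation*}
as elements of the ambient ring $\mathbb{Z} H_2(X \setminus T \times D^2, \partial)$ (identified via $\phi^\circ$). Letting $(p,q,r)$ and $\gamma$ range over all admissible choices, the sets $\bar{\mathcal{I}}(X, T_1)$ and $\bar{\mathcal{I}}(X, T_2)$ coincide. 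In the nullhomologous case $[T_{(p,q,r)}] = 0$, the Morgan-Mrowka-Szab\'o sum in~(\ref{Equation 1}) collapses to a single well-defined integer $SW_{(X_{T, \gamma}(p,q,r), T_{(p,q,r)})}$ and the same transport argument yields $\mathcal{I}(X, T_1) = \mathcal{I}(X, T_2)$.

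The step I expect to require the most care is the claim that $\Phi$ is genuinely a diffeomorphism between the two surgery manifolds with matching surgery parameters: one must verify that the framings inherited by $\phi^\circ$ on $\partial \nu(T_2)$ can be chosen compatibly (Lagrangian framing in the nullhomologous regime, or the standard framing in general) so that the class $p[m] + q[l] + r[\mu_T]$ in $H_1(\partial(X \setminus \nu(T_1)); \mathbb{Z})$ is carried to the analogous class on the $T_2$ side without a shift in coefficients. This is precisely where the ambient-independence of the sets $\bar{\mathcal{I}}$ and $\mathcal{I}$ is used: by ranging over all $(p,q,r)$ and all surgery curves, any reparametrization induced by $\phi^\circ$ permutes the elements of the set without changing it. Everything else is formal manipulation of the Morgan-Mrowka-Szab\'o formula~(\ref{Modified MMS 1})-(\ref{Modified MMS 2}) together with the diffeomorphism invariance of $SW'$.
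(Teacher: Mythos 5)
The paper does not supply its own proof of this proposition: it is stated as a citation of Fintushel--Stern \cite[Proposition 2.1]{[FintushelSternTori]}, with no argument given, and the reader is referred to the original source. So there is no in-paper proof to compare against, but your contrapositive argument is the natural and correct one, and is essentially what Fintushel--Stern's proof amounts to: a diffeomorphism of pairs $(X,T_1)\to(X,T_2)$, after an isotopy taking $\nu(T_1)$ onto $\nu(T_2)$, restricts to a diffeomorphism of complements, which (being the complement that the relative group ring $\Z H_2(X\setminus T\times D^2,\partial)$ is built from) carries each surgered manifold and its core torus to the corresponding one on the $T_2$ side, and the diffeomorphism invariance of the Seiberg--Witten invariant under a fixed homology orientation, combined with the projection $\pi(p,q,r)_\ast$, forces the sets $\bar{\mathcal{I}}$ (resp.\ $\mathcal{I}$) to coincide.

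Two small points worth making explicit. First, you correctly flag that $\phi^\circ$ need not match the chosen framings $(m_1,l_1)$ on $\partial\nu(T_1)$ with those on $\partial\nu(T_2)$; a framing discrepancy corresponds to composing with an element of $SL(2,\Z)\ltimes \Z^2$ acting on the $(p,q,r)$-coordinates, but since $\bar{\mathcal{I}}$ and $\mathcal{I}$ are defined as the set over \emph{all} $(p,q,r)\in\Z^3$ (and implicitly all surgery curves), this merely re-indexes the set and the equality survives. Second, the hypothesis that a homology orientation of $X$ is fixed does not by itself guarantee that $\phi$ preserves it: if $\phi$ reverses the homology orientation then $SW'$ flips sign, but because $\mathcal{SW}_X$ in~(\ref{Laurent Polynomial}) packages $\pm\beta_j$ together and the invariant is set-valued, the sets are still carried onto each other. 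Neither of these affects the conclusion, but stating them closes the loop on "the step requiring the most care" that you identify at the end.
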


\section{Proof of main results.}\label{Section Proofs Main Results}


\subsection{Proof of Theorem \ref{Theorem A}}\label{Section Proof of Theorem A}We follow the notation of Section \ref{Section 2-tori in the 4-torus}. The proof consists of nine steps. The first step is to construct a closed 4-manifold $X(2)$ with four properties \footnote{Szab\'o describes a similar construction in \cite[Section 2]{[Szabo]}}:

(1) it has a symplectic structure \footnote{A result of Taubes \cite{[Taubes]} implies $\Sw_{X(2)}\neq 0$}, 

(2) its fundamental group is isomorphic to\begin{equation}\label{Fundamental group X2}\Z^2 = \Z x \oplus \Z b.\end{equation}

(3) There are two disjoint homologically essential  2-tori of self-intersection zero $T_1$ and $T_3$ embedded in $X(2)$ such that the inclusion induced homomorphism\begin{equation}\label{Isomorphism Group Complement 1}\pi_1(X(2)\backslash(\nu(T_1)\sqcup \nu(T_3)))\rightarrow \pi_1(X(2))\end{equation}is an isomorphism. The torus $T_1$ contains a loop whose homotopy class corresponds to $x$ and the torus $T_3$ contains a loop whose homotopy class corresponds to $b$, where $x$ and $b$ are the generators of the group (\ref{Fundamental group X2}).

(4) The Euler characteristic satisfies $\chi(X(2)) = \chi(X)$, the signature is $\sigma(X(2)) = \sigma(X)$, and the second Stiefel-Whitney class is $w_2(X(2)) = w_2(X)$. 

Define the 4-manifold as the symplectic sum \cite{[Gompf3]}\begin{equation}\label{Manifold First Step}X(2):= (X\backslash \nu(T)) \cup (T^2\times T^2\backslash \nu (T_2)),\end{equation}where $T_2 = Y\times A_2$ as in Section \ref{Section 2-tori in the 4-torus}. Notice that $T_2$ is a homologically essential Lagrangian submanifold of a symplectic 4-torus, and the symplectic structure can be perturbed so that $T_2$ becomes a symplectic submanifold \cite[Lemma 1.6]{[Gompf3]}. The 4-manifold (\ref{Manifold First Step}) admits a symplectic structure and this concludes the proof of Property (1). We now argue that (\ref{Manifold First Step}) satisfies Property (2). Let $\mu_2$ be the homotopy class of the meridian of the torus (\ref{Torus 2}). The hypothesis on the existence of an isomorphism $\pi_1(X\backslash \nu(T))\rightarrow \pi_1(X) = \{1\}$ implies that the relations\begin{equation}\label{Relations 1}\mu_2 = 1 = y = a\end{equation}hold in the fundamental group of $X(2)$ and we conclude that the group $\pi_1(X(2))$ is the rank two free abelian group on the generators $x$ and $b$ using the Seifert-van Kampen theorem. Regarding Property (3), notice that the 2-tori (\ref{Torus 1}) and (\ref{Torus 3}) are disjoint from the 2-torus (\ref{Torus 2}) that was employed in the construction of $X(2)$.  Therefore, both (\ref{Torus 1}) and (\ref{Torus 3}) are contained in $X(2)$. To show that the group isomorphism (\ref{Isomorphism Group Complement 1}) exists, we show that the meridians $\mu_1$ and $\mu_3$ of the two 2-tori are nullhomotopic in $X(2)\backslash(\nu(T_1)\sqcup \nu(T_2))$. Following the calculations of Baldridge-Kirk \cite[p. 922]{[BaldridgeKirk1]}, with our choice of framings for $T_1$ and $T_3$, we can conclude that the meridian of $T_1$ is given by $[\tilde{b}, \tilde{y}]$ and the meridian of $T_3$ is $[\tilde{x}, \tilde{y}]$ where $\tilde{g}$ denotes a conjugate of the element $g$. By (\ref{Relations 1}), we conclude that the relations\begin{equation}\label{Relations 2}\mu_1 = 1 = \mu_3\end{equation}hold in the group $\pi_1(X(2)\backslash(\nu(T_1)\sqcup \nu(T_2)))$ and the existence of the isomorphism (\ref{Isomorphism Group Complement 1}) follows. We now address Property (4) of (\ref{Manifold First Step}). A Mayer-Vietoris sequence reveals that $\chi(X(2)) = \chi(X)$. Novikov additivity implies $\sigma(X(2)) = \sigma(X)$. An argument guaranteeing that the second Stiefel-Whitney class of $X(2)$ is zero whenever $w_2(X) = 0$ was given by Gompf in \cite[Proposition 1.2]{[Gompf3]}.

The second step is to construct an infinite set\begin{equation}\label{Infinite Set 4-manifolds}\{X_n(1) : n\in \Z\}\end{equation} of closed irreducible 4-manifolds with infinite cyclic fundamental group $\Z b$ that satisfy three properties:

(1') $X_n(1)$ is homeomorphic to $X\#S^2\times S^2\# S^1\times S^3$ for every $n\in \Z$.

(2') $X_{n_1}(1)$ is not diffeomorphic to $X_{n_2}(1)$ if $n_1\neq n_2$.

(3') For every $n\in \Z$, there is a homologically essential 2-torus of self-intersection zero $T_3$ embedded in $X_n(1)$ such that the inclusion induced homomorphism\begin{equation}\label{Induced Isomorphism X1}\pi_1(X_n(1)\backslash(\nu(T_3)))\rightarrow \pi_1(X_n(1)) = \Z\end{equation}is an isomorphism. The torus $T_3$ contains a loop whose homotopy class $b$ is the generator of the fundamental group $\pi_1(X_n(1)) = \Z b$.

Fix an $n\in \Z$ and define a closed 4-manifold\begin{equation}\label{4-Manifold}X_n(1):= X(2)_{T_1, x}(1, 0, n),\end{equation}i.e., the 4-manifold that is obtained by applying a $(1, 0, n)$-torus surgery to $X$ on the 2-torus $T_1$ along the surgery curve $m = x$. Since the meridian $\mu_1$ is nullhomotopic in the complement (cf. (\ref{Isomorphism Group Complement 1}) and (\ref{Relations 2})), the Seifert-van Kampen theorem implies\begin{equation}\pi_1(X_n(1)) = \langle x, b : [x, b] = 1 = x\rangle = \Z b.\end{equation}The claim that the 4-manifold (\ref{4-Manifold}) satisfies Property (1') follows from Lemma \ref{Lemma Building Block} and Lemma \ref{Lemma Intersection Form}. Property (2') holds since the Seiberg-Witten invariants satisfy\begin{equation}\label{SW}\Sw_{X_{n_1}(1)}\neq \Sw_{X_{n_2}(1)}\end{equation} for $n_1\neq n_2$ as follows from the Morgan-Mrowka-Szab\'o formula (\ref{Equation 1}). For details on the computation of the Seiberg-Witten invariants of $X_n(1)$, the reader is directed to \cite[Section 3]{[Szabo]} or \cite[Corollary 1]{[FintushelParkStern]}. Results of Szab\'o \cite{[Szabo]} and Kotschick \cite[Section 5.2]{[Kotschick]} imply that the 4-manifolds (\ref{Infinite Set 4-manifolds}) are irreducible since $\Z$ is a residually finite group.  An argument to prove the claim that the 4-manifold (\ref{4-Manifold}) satisfies Property (3') is obtained by a small tweak to the argument used in the proof of Property (3) for $X(2)$. Indeed, the 2-torus $T_3\subset X(2)$ is disjoint from the surgery and is contained in $X(1)_n$. The existence of the isomorphism (\ref{Induced Isomorphism X1}) follows from (\ref{Relations 2}). 
This concludes the construction of the infinite set (\ref{Infinite Set 4-manifolds}) of pairwise non-diffeomorphic irreducible 4-manifolds in the homeomorphism class of $X\#S^2\times S^2\# S^1\times S^3$.

Let us set up the third step: fix $n\in \Z$ and apply a $(0, 1, 0)$-torus surgery to $X_n(1)$ on $T_2$ along $l = b$ and denote the resulting 4-manifold by $X_n$. That is,\begin{equation}\label{4-Manifold Final}X_n:= {X_n(1)}_{T_2, b}(0, 1, 0).\end{equation}In particular, $X_n$ is simply connected. The third step is to show that there is a diffeomorphism\begin{equation}\label{Final Diffeomorphism 1}X_n \approx X\end{equation}for every $n\in \Z$. The 4-manifold $X_n$ is diffeomorphic to a generalized fiber sum of $X$ and the 4-manifold that is obtained from the 4-torus by applying a $(1, 0, n)$-torus surgery on $T_1$ along $m = x$ and a $(0, 1, 0)$-torus surgery on $T_3$ along $l = b$ for a fixed $n\in \Z$. The latter is diffeomorphic to the product of a 2-sphere and 2-torus by  the diffeomorphism (\ref{Diffeomorphism 1}) of Lemma \ref{Lemma Diffeomorphism 1}. Notice that $X_1$ is the 4-manifold obtained as the symplectic sum of $X$ and $T^2\times S^2$ along $T$ and $T^2\times \{s\}\subset T^2\times S^2$. In particular, $X_n$ is obtained from $X$ by performing a $(0, 0, 1)$-torus surgery along $T$. This torus surgery changes nothing and produces the original 4-manifold $X$. Since the choice of $n$ was arbitrary, we conclude that there is a diffeomorphism (\ref{Final Diffeomorphism 1}).

The fourth step is to construct an infinite set of nullhomologous 2-tori as in (\ref{Inequivalent Tori}). Let\begin{equation}\label{Tori Built}T_n':= T^2\times \{0\}\subset X_n\end{equation}be the core 2-torus of the surgery (\ref{4-Manifold Final}). Notice that $T_n'$ is a nullhomologous 2-torus since $T_2$ was a homologically essential 2-torus for every $n\in \Z$. The infinite set (\ref{Inequivalent Tori}) is made of the image of (\ref{Tori Built}) under the diffeomorphism (\ref{Final Diffeomorphism 1}).

The fifth step is to argue that these 2-tori are pairwise topologically isotopic and inequivalent. A result of Sunukjian \cite[Theorem 7.2]{[Sunukjian]} says that the 2-tori are topologically unknotted since $b_2(X)\geq |\sigma(X)| + 6$. We use the invariant of Definition \ref{Definition FS Invariant for 2-Tori} to show that these 2-tori are smoothly inequivalent. We can undo the construction (\ref{4-Manifold Final}) by applying a torus surgery to $X_n$ on $T_n'$ and obtain $X_n(1)$ back for any $n\in \Z$. Since the set (\ref{Infinite Set 4-manifolds}) consists of 4-manifolds with pairwise different Seiberg-Witten invariant, possibly after passing to a subsequence, we have that $\mathcal{I}(X_{n_i}, T_{n_i}')\neq \mathcal{I}(X_{n_j}, T_{n_j}')$ for $i\neq j$\footnote{We could have used a similar argument to \cite[Section 3, Proof of Theorem 1.1]{[HoffmanSunukjian]} as well}. Hence, any two 2-tori (\ref{Tori Built}) are  inequivalent.

Let us set up the sixth step. Fix an $n\in \Z$ and consider the essential loop $l = b\subset T_3 \subset X_n(1)$ that generates the infinite cyclic group $\pi_1 X_n(1) = \Z$. We choose the framing on $l = b$ that is induced by the product structure $\nu(T_3) \approx T^2\times D^2$; see Section \ref{Section Spherical Modifications}. We do surgery along $l = b$ with this choice of framing to obtain a 4-manifold\begin{equation}\label{Final Spherical Modification}Y_n:= (X_n(1)\backslash \nu(b)) \cup (D^2\times S^2).\end{equation} The sixth step consists of showing that there is a diffeomorphism\begin{equation}\label{Final Diffeomorphism 2}Y_n \approx X\#S^2\times S^2.\end{equation} for every $n\in \Z$. This follows from  Lemma \ref{Lemma Diffeomorphism 1}, the diffeomorphism (\ref{Final Diffeomorphism 1}) and the argument due to Moishezon used in the proof of Lemma \ref{Lemma Diffeomorphism 1}. 

The seventh step consists of producing the nullhomotopic 2-spheres of (\ref{Inequivalent Spheres}). The belt 2-sphere\begin{equation}\label{Sphere Built}S_n:= \{0\}\times S^2\subset D^2\times S^2\subset Y_n\end{equation}is nullhomologous for every $n\in \Z$. Since $\pi_1Y_n = \{1\}$, we have that $S_n$ is nullhomotopic since $H_2(Y_n) = \pi_2(Y_n)$ for every $n\in \Z$ by the Hurewicz theorem. The infinite set (\ref{Inequivalent Spheres}) consists of the image of (\ref{Sphere Built}) under the diffeomorphism (\ref{Final Diffeomorphism 2}) for $n\in \Z$. This concludes the construction of the infinite set (\ref{Inequivalent Spheres}) embedded in $X\#S^2\times S^2$.

We discern these submanifolds in the eighth step by reversing the surgery (\ref{Final Spherical Modification}). That is, we perform surgery on the belt 2-sphere (\ref{Sphere Built}) to $Y_n \approx X\# S^2\times S^2$ and obtain $X(1)_n$ back. Given that two 4-manifolds $X(1)_{n_1}$ and $X(1)_{n_2}$ in the set (\ref{Infinite Set 4-manifolds}) are not diffeomorphic if $n_1\neq n_2$, there is no diffeomorphism of pairs $(X\#S^2\times S^2, S_{n_1})\rightarrow (X\#S^2\times S^2, S_{n_2})$. We conclude that the 2-spheres in the set (\ref{Inequivalent Spheres}) are pairwise inequivalent. 

In the ninth and final step, we prove that any two such 2-spheres are topologically unknotted. Theorem \ref{Theorem Homeomorphism Spheres} and the diffeomorphism (\ref{Final Diffeomorphism 2}) imply that there is a homeomorphism of pairs\begin{equation}\label{Homeomorphism A}F: (X\#S^2\times S^2, S_1)\rightarrow (X\#S^2\times S^2, S_2),\end{equation}which as a self-homeomorphism of $X\#S^2\times S^2$ has trivial normal invariant. Work of Quinn \cite{[Quinn]} says that $F$ is homotopic to the identity map; cf. \cite{[CochranHabegger]}. Work of Perron \cite{[Perron]} and Quinn \cite{[Quinn]} imply that the homeomorphism $F$ is topologically isotopic to the identity, and the isotopy takes $S_2$ onto $S_1$. Without loss of generality, we can assume that the exterior of $S_1$ is homeomorphic to $X\#S^2\times S^2\# S^1\times D^3$. In particular, $S_1$ bounds a locally flat embedded solid handlebody in $X\#S^2\times S^2$. It is also possible to invoke \cite[Theorem 7.2]{[Sunukjian]} to prove the claim. This concludes the proof of the theorem.

\hfill $\square$


\begin{remark}\label{Remark Smaller Topology} Infinite sets of pairwise inequivalent nullhomotopic 2-spheres embedded in $2\mathbb{CP}^2\# (k +1)\overline{\mathbb{CP}^2}$ and inequivalent 2-tori embedded in $\mathbb{CP}^2\#k\overline{\mathbb{CP}^2}$ are built by applying the techniques in this paper to work of Akhmedov-Park \cite{[AkhmedovPark1], [AkhmedovPark2]}, Baldridge-Kirk \cite{[BaldridgeKirk1], [BaldridgeKirk2]}, Fintushel-Stern \cite{[FintushelStern2], [FintushelStern3]}. Work of Akbulut on handlebodies \cite{[Akbulut2], [Akbulut3], [Akbulut4], [Akbulut5]}, work of Baykur-Sunukjian on stabilizations \cite{[BaykurSunukjian]} along with results of Moishezon \cite{[Moishezon]} and Gompf \cite{[Gompf2]} are useful to pin down the diffeomorphism type of the 4-manifolds constructed.
\end{remark}

\subsection{Proof of Theorem \ref{Theorem FHMT}}\label{Section FHMT} A closed non-smoothable topological 4-manifold $M$ with $\pi_1M =\Z$ that is not homotopy equivalent to a connected sum $M_0\#S^1\times S^3$, where $M_0$ is a simply connected 4-manifold was constructed in  \cite[Corollary 3]{[HambletonTeichner]}, \cite[Theorem 1.2]{[FriedlHambletonMelvinTeichner]}. Its Euler characteristic and signature are $\chi(M) = 4 = \sigma(M)$, its second Stiefel-Whitney class is non-zero $w_2(M)\neq 0$, and its Kirby-Siebenmann invariant is trivial $\Ks(M) = 0$. We perform the cut-and-paste operation of Section \ref{Section Spherical Modifications} solely in the topological category in this section. Construct a closed simply connected 4-manifold $Y:= M\backslash \nu(\gamma)\cup D^2\times S^2$, where an arbitrary framing has been chosen and $\gamma\subset M$ is a loop whose homotopy class generates $\pi_1M = \Z$. Although we have not specified a framing, we have that $w_2(Y)\neq 0$ and we can pin down a specific topological 4-manifold $Y$. Consider the nullhomotopic 2-sphere $\Sigma := \{0\}\times S^2\subset D^2\times S^2$ that is locally flat embedded in $Y$. A result of Freedman-Quinn \cite[Section 10.1]{[FreedmanQuinn]} implies that $Y$ is homeomorphic to $\# 4 \mathbb{CP}^2$. There is a locally flat nullhomologous 2-sphere $S \subset 4\mathbb{CP}^2$ whose exterior is $4\mathbb{CP}^2\# S^1\times D^3$. These two surfaces are concordant by \cite[Theorem 6.1]{[Sunukjian]}. The exteriors of $\Sigma$ and $S$ are not homotopy equivalent since $M$ is not homotopy equivalent to $4\mathbb{CP}^2\#S^1\times S^3$ \cite[Corollary 3]{[HambletonTeichner]}.

\hfill $\square$

\subsection{Proof of Theorem \ref{Theorem D}}\label{Section Further Examples} 




The constructions of the 4-manifolds and the surfaces (\ref{Tori Example B}) and (\ref{Knots Example B}) were described in Section \ref{Section 2-tori in the 4-torus}. The invariant of Definition \ref{Definition FS Invariant for 2-Tori} discerns these submanifolds as it was argued in Section \ref{Section Proof of Theorem A}. The corresponding diffeomorphisms are established in  (\ref{Diffeomorphism 1}), (\ref{Diffeomorphism 2}), (\ref{Diffeomorphism 5}), and (\ref{Diffeomorphism 6}) of Lemma \ref{Lemma Diffeomorphism 1}, and Lemma \ref{Lemma FintushelParkStern}.

\hfill $\square$


\begin{thebibliography}{99}
\bibitem{[Akbulut1]} S. Akbulut, \emph{Nash homotopy spheres are standard}, Proceedings of the 17th. G\"okova Geometry-Topology Conference 2010, 135 - 144, GGT, G\"okova, 2010.

\bibitem{[Akbulut2]} S. Akbulut, \emph{The Catanese-Ciliberto-Mendes surface}, J. G\"okova Geom. Topol. 5 (2011), 86 - 102. 

\bibitem{[AkbulutR]} S. Akbulut, \emph{Isotoping 2-spheres in 4-manifolds}, in Proceedings of the 21st. G\"okova Geometry-Topology Conference 2014, 264 - 266, GGT, G\"okova, 2014.

\bibitem{[Akbulut3]} S. Akbulut, \emph{The Akhmedov-Park exotic $\mathbb{CP}^2\#3\overline{\mathbb{CP}}^2$}, Adv. Math. 274 (2015), 928 - 947.

\bibitem{[Akbulut4]} S. Akbulut, \emph{The Akhmedov-Park exotic $\mathbb{CP}^2\#2\overline{\mathbb{CP}}^2$}, Adv. Math. 274 (2015), 948 - 967.

\bibitem{[Akbulut5]} S. Akbulut, \emph{4-manifolds}, Oxford Graduate Texts in Mathematics, 25. Oxford University Press, 2016, vii + 263 pp.

\bibitem{[AkhmedovBaykurPark]} A. Akhmedov, R. I. Baykur and B. D. Park, \emph{Constructing infinitely many smooth structures on small 4-manifolds}, J. Topol. 1 (2008), 409 - 428.

\bibitem{[AkhmedovPark1]} A. Akhmedov and B. D. Park, \emph{Exotic smooth structures on small 4-manifolds}, Invent. Math. 173 (2008), 209 - 223. 

\bibitem{[AkhmedovPark2]} A. Akhmedov and B. D. Park, \emph{Exotic smooth structures on small 4-manifolds with odd signatures}, Invent. Math. 181 (2010), 577 - 603. 

\bibitem{[AuckleyKimMelvinRuberman]} D. Auckly, H. J. Kim, P. Melvin and D. Ruberman, \emph{Stable isotopy in four dimensions}, Jour. London Math. Soc. 91 (2015), 439 - 463.

\bibitem{[AuckleyKimMelvinRubermanSchwartz]} D. Auckly, H. J. Kim, P. Melvin, D. Ruberman and H. Schwartz, \emph{Isotopy of surfaces in 4-manifolds after a single stabilization} Adv. in Math. 341 (2018), 609 - 615.

\bibitem{[AurouxDonaldsonKatzarkov]} D. Auroux, S. K. Donaldson and L. Katzarkov, \emph{Luttinger surgery along Lagrangian tori and non-isotopy for singular symplectic plane curves}, Math. Ann. 326 (2003), 185 - 203.

\bibitem{[BaldridgeKirk1]} S. Baldridge and P. Kirk, \emph{A manifold homeomorphic but not diffeomorphic to $\mathbb{CP}^2 \# 3\overline{\mathbb{CP}}^2$}, Geom. Topol. 12 (2008), 919 - 940.

\bibitem{[BaldridgeKirk2]} S. Baldridge and P. Kirk, \emph{Constructions of small symplectic 4-manifolds using Luttinger surgery}, J. Diff. Geom. 82 (2008), 919 - 940.

\bibitem{[BaykurSunukjian]} R. I. Baykur and N. Sunukjian, \emph{Round handles, logarithmic transforms and smooth 4-manifolds}, J. Topol. 6 (2013), 49 - 63.

\bibitem{[CochranHabegger]} T. D. Cochran and N. Habegger, \emph{On the homotopy theory of simply connected four manifolds}, Topology 29 (199-), 419 - 440.

\bibitem{[ConwayPowell]} A. Conway and M. Powell, \emph{Embedded surfaces with infinite cyclic knot group}, arXiv:2009.13461v3, 2020. 

\bibitem{[Finashin]} S. Finashin, \emph{Knotting of algebraic curves in $\C P^2$}, Topology 41 (2002), 47 - 55. 

\bibitem{[FintushelParkStern]} R. A. Fintushel, B. D. Park, and R. J. Stern, \emph{Reverse engineering small 4-manifolds}, Algeb. Geom. Topol. 7 (2007), 2103 - 2116.

\bibitem{[FintushelStern1]} R. A. Fintushel and R. J. Stern, \emph{A fake 4-manifold with $\pi_1 = \Z$ and $b^+ = 4$}, Turk J. Math 18 (1994) 1 - 6.

\bibitem{[FintushelSternS]} R. A. Fintushel and R. J. Stern, \emph{Surfaces in 4-manifolds}, Math. Res. Lett. 4 (1997), 907 - 914. 

\bibitem{[FintushelSternK]} R. A. Fintushel and R. J. Stern, \emph{Knots, links, and 4-manifolds}, Invent. Math. 134 (1998), 363 - 400.

\bibitem{[FintushelSternTori]} R. A. Fintushel and R. J. Stern, \emph{Invariants for Lagrangian tori}, Geom. Topol. 8 (2004), 947 - 968. 

\bibitem{[FintushelStern2]} R. A. Fintushel and R. J. Stern, \emph{Pinwheels and nullhomologous surgery on 4-manifolds with $b^+ = 1$}, Alg. Geom. Topol. 11 (2011), 1649 - 1699.

\bibitem{[FintushelStern3]} R. A. Fintushel and R. J. Stern, \emph{Surgery on nullhomologous tori}, Geom. Topol. Monographs 18 (2012), 61 - 81.

\bibitem{[Freedman]} M. Freedman, \emph{The topology of four-dimensional manifolds}, J. Diff. Geom. 17 (1982), 357 - 453.

\bibitem{[FreedmanQuinn]} M. H. Freedman and F. Quinn, \emph{Topology of 4-manifolds}, Princeton  Mathematical Series 39, Princeton University Press, Princeton, NJ, 1990, viii + 259 pp. 

\bibitem{[FriedlHambletonMelvinTeichner]} S. Friedl, I. Hambleton, P. Melvin, and P. Teichner, \emph{Non-smoothable four-manifolds with infinite cyclic fundamental group} Int. Math. Res. Not. 2007 (11) (2007) ID rnm031, 20 pp.

\bibitem{[Gompf2]} R. E. Gompf, \emph{Sums of elliptic surfaces}, J. Differential Geom. 34 (1991), 93 - 114.

\bibitem{[Gompf3]} R. E. Gompf, \emph{A new construction of symplectic manifolds}, Ann. of Math. 142 (1995), 527 - 595.

\bibitem{[GompfStipsicz]} R. E. Gompf and A. I. Stipsicz, \emph{4-Manifolds and Kirby Calculus}, Graduate Studies in Mathematics, 20. Amer. Math. Soc., Providence, RI, 1999. xv + 557 pp.

\bibitem{[Gordon]} C. McA. Gordon, \emph{Knots in the 4-sphere}, Comm. Math. Helv. 51 (1976), 585 - 596. 

\bibitem{[HambletonTeichner]} I. Hambleton and P. Teichner, \emph{A non-extendable hermitian form over $\Z[\Z]$}, Manuscripta Math. 93 (1997), 435 - 442.

\bibitem{[Hayden]} K. Hayden, \emph{Exotically knotted disks and complex curves}, arXiv: 2003.13681v2, 2021. 


\bibitem{[HoLi]} C. I. Ho and T.-J. Li, \emph{Luttinger surgery and Kodaira dimension}, Asian J. Math. Vol. 16 No. 2 (2012), 299-318.

\bibitem{[HoffmanSunukjian]} N. R. Hoffman and N. S. Sunukjian, \emph{Null-homologous exotic tori}, Alg. Geom. Topol. 20 (2020), 2677 - 2685.

\bibitem{[JuhaszMillerZemke]} A. Juh\'asz, M. Miller and I. Zemke, \emph{Transverse invariants and exotic surfaces in the 4-ball}, Geom. Topol. 25 (2021), 2963 - 3012.

\bibitem{[Kawauchi1]} A. Kawauchi, \emph{Splitting a 4-manifold with infinite cyclic fundamental group, revised in a definite case}, J. Knot Theory and its Ramifications, 23, (2014), 1450029, 6 pp.

\bibitem{[Kim2]} H. J. Kim, \emph{Modifying surfaces in 4-manifolds by twist spinning}, Geom. Topol. 10 (2006), 27 - 56.

\bibitem{[KimRuberman]} H. J. Kim and D. Ruberman, \emph{Topological triviality of smoothly knotted surfaces in 4-manifolds}, Trans. Amer. Math. Soc. 360 (2008), 5869 - 5881.

\bibitem{[KimRuberman2]} H. J. Kim and D. Ruberman, \emph{Smooth surfaces with non-simply-connected complements}, Alg. \& Geom. Topol. 8 (2008), 2263 - 2287.

\bibitem{[Kotschick]} D. Kotschick, \emph{The Seiberg-Witten invariants of symplectic four-manifolds}, S\'eminaire Bourbaki, 48\`eme ann\'ee, 1995 - 96, Ast\'erisque 241 (1997), 195 - 220. 

\bibitem{[Luttinger]} K. L. Luttinger, \emph{Lagrangian tori in $\R^4$}, J. Differential Geom. 42 (1995), 220 - 228. 

\bibitem{[Mark]} T. E. Mark, \emph{Knotted surfaces in 4-manifolds}, Forum Math 25 (2013), 597 - 637.

\bibitem{[Moishezon]} B. Moishezon, \emph{Complex surfaces and connected sums of complex projective planes}, Lecture Notes in Math. 603, Springer-Berlin, 1977.

\bibitem{[MorganMrowkaSzabo]} J. W. Morgan, T. S. Mrowka and Z. Szab\'o, \emph{Product formulas along $T^3$ for Seiberg-Witten invariants}, Math. Res. Lett. 4 (1997), 915 - 929.

\bibitem{[Oba]} T. Oba, \emph{Surfaces in the 4-disk with the same boundary and fundamental group}, Math. Res. Lett. 27 (2020), 265 - 279.

\bibitem{[Perron]} B. Perron, \emph{Pseudo-isotopies et isotopies en dimension quatre dans la cat\'egorie topologique}, Topology 25 (1986), 381 - 397.

\bibitem{[Quinn]} F. Quinn, \emph{Isotopy of 4-manifolds}, J. Differential Geom. 24 (1986), 343 - 372.

\bibitem{[RayRuberman]} A. Ray and D. Ruberman, \emph{4-dimensional analogues of Dehn's lemma}, J. Lond. Math. Soc. 69 (2017), 111 - 132.

\bibitem{[Ruberman]} D. Ruberman, \emph{An obstruction to smooth isotopy in dimension 4}, Math. Res. Lett. 5 (1998), 743 - 758.

\bibitem{[StongWang]} R. Stong and Z. Wang, \emph{Self-homeomorphisms of 4-manifolds with fundamental group Z}, Topology Appl. 106 (2000), 49 - 56. 

\bibitem{[Sunukjian]} N. S. Sunukjian, \emph{Surfaces in 4-manifolds: concordance, isotopy, and surgery}, Int. Math. Res. Not. 17 (2015), 7950 - 7978.

\bibitem{[Szabo]} Z. Szab\'o, \emph{Simply-connected irreducible 4-manifolds with no symplectic structures}, Invent. Math. 132 (1998), 457 - 466. 

\bibitem{[Taubes]} C. H. Taubes, \emph{The Seiberg-Witten invariants and symplectic forms}, Math. Res. Lett. 1 (1994), 809 - 822.

\end{thebibliography}
\end{document}